\documentclass[12pt, twoside]{article}
\usepackage{amsmath,amsthm,amssymb}
\usepackage{times}
\usepackage{enumerate}
\usepackage{graphicx}
\usepackage{caption}
\usepackage{subcaption}
\usepackage{verbatim}

\pagestyle{myheadings}
\markboth{C. M. Elliott, L. Hatcher and
B. Stinner}{}

%% Numbered objects (text of theorems etc. is NOT italicized).
%% The optional parameters indicate that all objects are numbered together, and "by section".
%% However, you are welcome to use any other numbering system of your choice.

\theoremstyle{definition}
\newtheorem{thm}{Theorem}[section]

\newtheorem{defin}[thm]{Definition}
\newtheorem{rem}[thm]{Remark}

\newtheorem{pro}[thm]{Proposition}
\newtheorem{prb}[thm]{Problem}

%% A numbered theorem with a fancy name:

%% An unnumbered remark:

%% Equations numbered by section:

\numberwithin{equation}{section}

%%%%%%%%%%% For IFB
\frenchspacing

\textwidth=167mm
\textheight=23cm
\parindent=16pt
\oddsidemargin=-0.5cm
\evensidemargin=-0.5cm
\topmargin=-0.5cm

\newcommand{\subjclass}[1]{\bigskip\noindent\emph{2010 Mathematics Subject Classification:}\enspace#1}
\newcommand{\keywords}[1]{\noindent\emph{Keywords:}\enspace#1}
%%%%%%%%%%%%%%%%%%%%%%%%%%%%%%%%%%%
%%%%%%%%%%%%%%%%%%%%%%%%%%%%%%%%%%%

%%%% Put your macros here:
\newcommand{\hf}{u}
\newcommand{\spon}{H_s}	%spontaneous curvature

\newcommand{\mc}{H}
\newcommand{\gc}{K}
\newcommand{\gbr}{\kappa_G}

\newcommand{\mbr}{\kappa}
\newcommand{\st}{\sigma}
\newcommand{\lt}{b}

\def\Xint#1{\mathchoice
	{\XXint\displaystyle\textstyle{#1}}%
	{\XXint\textstyle\scriptstyle{#1}}%
	{\XXint\scriptstyle\scriptscriptstyle{#1}}%
	{\XXint\scriptscriptstyle\scriptscriptstyle{#1}}%
	\!\int}
\def\XXint#1#2#3{{\setbox0=\hbox{$#1{#2#3}{\int}$ }
		\vcenter{\hbox{$#2#3$ }}\kern-.6\wd0}}

\def\dashint{\Xint-}

\usepackage{color}
\definecolor{newblue}{RGB}{90,150,250}
\definecolor{newblue2}{cmyk}{1,0.6,0,0.06}
%\definecolor{green}{RGB}{1.80,0.06}
\definecolor{grey}{gray}{0.5}
\usepackage[pdfborder={0 0 0},
  colorlinks=true,
  citecolor=newblue,
  linkcolor=newblue2,
  urlcolor=newblue]{hyperref}
\usepackage[nameinlink]{cleveref}

\usepackage{xcolor}
\definecolor{Gr}{HTML}{006400}

\usepackage[normalem]{ulem}
\usepackage[nameinlink]{cleveref}

\usepackage{autonum}
%%%%%%%%%%%%%

\begin{document}

%%%%% To ease editing, add:

\baselineskip=17pt

%%%%%%%%%%%%%%%%

\title{On the  sharp interface limit of a phase  field model for near-spherical two phase biomembranes}

\author{\textsc{Charles M. Elliott}\\
\small Mathematics Institute,
University of Warwick, Coventry, CV4 7AL, United Kingdom\\
\small E-mail: C.M.Elliott@warwick.ac.uk\\\\
\textsc{Luke Hatcher}\\
\small Mathematics Institute,
University of Warwick, Coventry, CV4 7AL, United Kingdom\\
\small E-mail: L.Hatcher@warwick.ac.uk\\\\
\textsc{Bj{\"o}rn Stinner}\\
\small Mathematics Institute and Centre for Scientific Computing,\\
\small University of Warwick, Coventry, CV4 7AL, United Kingdom\\
\small E-mail: Bjorn.Stinner@warwick.ac.uk}

\date{}

\maketitle

%%%%%%%%

\begin{abstract}
We consider  sharp interface asymptotics for a phase field model of two phase near spherical biomembranes  involving a coupling between the local mean curvature and the local composition proposed by the first and second authors. The model is motivated by lipid raft formation.  
We introduce a reduced diffuse interface energy depending only on the membrane composition 
and derive the $\Gamma-$limit.
We demonstrate that the {Euler-Lagrange} equations for the limiting functional and the sharp interface energy coincide.
Finally, we consider a system of gradient flow equations with conserved Allen-Cahn dynamics for the phase field model. 
Performing a formal asymptotic analysis
we obtain a system of gradient flow equations for the sharp interface energy coupling geodesic curvature flow for the phase interface  to a fourth order PDE free  boundary problem for the surface deformation.

%A template for submissions to IFB. The subject classification and keywords are optional. For the subject classification, use
%the 2010 Mathematics Subject Classification available at www.ams.org/msc.

\subjclass{Primary 35Q92, 35C20; Secondary 35Q56,49J45.}

\keywords{Phase field; Helfrich; Biomembranes; Allen-Cahn and curvature motion.}
\end{abstract}

%%%%%%%%%%%%%%%%%%%%%%%%%%%%%%%%%%%%%%%%%%%%%%%%%%%%%%%%%%%%%%%%%%%%%%%%%%%%%%%%%%%%%
\section{Introduction}
Biological membranes are lipid bilayers which separate a cell's interior from it's exterior and often contain embedded molecules such as proteins. Biomembranes also exhibit fluid-like properties which enables the lateral transport of these molecules and can lead to the formation of intramembrane domains \cite{bassereau2018physics}.  In this paper we consider a mathematical model in which domains are one phase of a two phase biomembrane.
Since the length scales of a biomembrane are much larger than its width, biomembranes are typically modelled by hypersurfaces and the introduction of surface energy functionals.

 In \cite{elliott_hatcher_2020} the first and second authors  considered surfaces  
$\Gamma_\rho$, of the form
\begin{align}
\label{eqn-graph}
\Gamma_\rho=\{x+\rho u(x)\nu(x):x\in \Gamma\}
\end{align}
where $\Gamma$ is as sphere of radius $R$.  A surface of this type is a graph over the  base surface $\Gamma$ with unit normal $\nu$ and   described by a height function $u:\Gamma\to\mathbb{R}$ with small positive constant $\rho$.  In \cite{elliott_hatcher_2020}  the following energy was derived

\begin{equation}
\label{eqn-diffuse-energy}
\mathcal{E}_{DI}(\hf,\phi) = \int_{\Gamma}  \left( e_m(\phi, u,\Delta_\Gamma u)+e_{DI}(\phi,\nabla_\Gamma \phi)\right )
\end{equation}
where an  approximate membrane elastic energy, $e_m(\phi,u,\Delta_\Gamma u)$,  and a diffuse interface energy, $e_{DI}(\phi,\nabla_\Gamma\phi),$ are given   by
\begin{align}
e_m(u,\Delta_\Gamma u,\phi):= &
\frac{\kappa}{2}\left(( \Delta_{\Gamma} u+\frac{2u}{R^2}+H_s(\phi))^2 - \frac{u}{R^2}(\Delta_\Gamma u+\frac{2u}{R^2})\right)+ 
\frac{\sigma }{2}u \left(\Delta_\Gamma u+\frac{2u}{R^2}\right)  \\ \quad 
 e_{DI}(\phi,\nabla_\Gamma\phi):= & b\left(\frac{\epsilon}{2} |\nabla_\Gamma\phi|^2 + \frac{1}{\epsilon} W(\phi)\right) .
\end{align}

By applying a perturbation method introduced in \cite{elliott2016small} (see also \cite{elliott2019small}), it was shown that (\ref{eqn-diffuse-energy})
 approximates the Canham-Helfrich energy functional \cite{canham1970minimum,helfrich1973elastic} 
\begin{equation}
\label{eqn-helfrich+tension+assumptions}
\mathcal{F}_{DI}(\Gamma,\phi):=\int_{\Gamma}\left(\frac{1}{2}\mbr(\mc-\spon(\phi))^2+\st\right)+  \lt\int_\Gamma \left (\frac{\epsilon}{2}|\nabla_\Gamma\phi|^2+\frac{1}{\epsilon}W(\phi)\right).
\end{equation}

 The first term in  (\ref{eqn-helfrich+tension+assumptions}) is a Canham -Helfrich surface energy. Here $\mc$ is the mean curvature of $\Gamma$. The parameter $\mbr>0$ is a bending rigidity and $\st\geq 0$ is the surface tension.  The membrane composition is given by  the order parameter  $\phi:\Gamma\to\mathbb{R}$.  $\spon(\phi)\equiv \Lambda \phi$ is a composition dependent spontaneous curvature. The coefficient $\Lambda>0$ couples the local order parameter $\phi$, to the local membrane curvature. 
 Note that in the  surface energy  we have omitted $\gbr\gc$ where $\gc$ is the Gauss curvature  and  $\gbr$ is a bending rigidity constant. This is valid when considering hypersurfaces of constant genus.

Domains are distinguished by the  use of  $W(\cdot)$,  a double well potential defined by $W(\phi):=\frac{1}{4}(\phi^2-1)^2$. ${b}$ is a diffuse interface energy coefficient associated with the phase boundary  and $\epsilon>0$ is a small parameter commensurate with the width of a diffuse interface separating the two phases. As $\epsilon\to 0$ then $\phi$ is forced to the roots of $W(\cdot)$ given by $\phi=\pm 1$ with these values  corresponding to the two  phases. The phase boundary is then the level set $\phi=0$ on $\Gamma$.

In this paper we wish to relate the diffuse interface energy \eqref{eqn-diffuse-energy} as $\epsilon \rightarrow 0$ to the  sharp interface energy
\begin{equation}
\label{eqn-sharp-interface-energy}
\mathcal E_{SI}(u,\gamma)=\int_{\Gamma }e_m(\chi_\gamma, u,\Delta_\Gamma u) + \int_{\gamma} \hat{b}
\end{equation} 
where the sphere $\Gamma$ is decomposed into subsets $\Gamma^{(1)}$ and $\Gamma^{(2)}$ both with common boundary $\gamma$ and $\chi_\gamma=1$ in $\Gamma^{(2)}$ and $\chi_\gamma=-1$ in $\Gamma^{(1)}$. The line energy coefficient $\hat{b}$ is scaled with the diffuse interface  energy  coefficient $b$ and  double well potential $W$ so  $\hat \lt=c_W\lt$, where $c_W:=\int_{-1}^{1}\sqrt{2W(s)}\:{\rm d}s=\frac{2\sqrt{2}}{3}$.

Related to \eqref{eqn-helfrich+tension+assumptions} is the sharp interface elastic energy first introduced by J{\"u}licher and Lipowsky \cite{julicher1993domain,julicher1996shape} and given by,
\begin{equation}
\label{eqn-helfrich}
\mathcal{F}_{SI}(\Gamma,\gamma):=\int_{\Gamma^{(1)}\cup\Gamma^{(2)}}\left(\frac{1}{2}\mbr(\mc-\spon^i)^2+\st \right) +\int_\gamma \hat{b}.
\end{equation}
for hypersurfaces $\Gamma=\Gamma^{(1)}\cup\gamma\cup\Gamma^{(2)}$. For the axisymmetic case a minimisation problem has been addressed \cite{choksi2013global} and numerical simulations explored \cite{garcke2020structure}. In the non-axisymmetric case very little has been rigorously proven although Brazda et al. has recently dealt with the minimisation problem in the weaker setting of oriented curvature varifolds \cite{brazda2019existence}. Analogous to the diffuse interface approach the same perturbation method could be applied to \eqref{eqn-helfrich}, to obtain \eqref{eqn-sharp-interface-energy}.

\
We relate the diffuse interface approach to the sharp interface approach in two ways. 
\begin{enumerate}
\item
First, by calculating the Euler-Lagrange equations we express the height function $u$ in terms of the functions $\phi$ and $\chi_\gamma$ for the diffuse and sharp interface energies respectively. By substituting these into our energies we eliminate $u$ and obtain a reduced diffuse interface energy $\widetilde{\mathcal{E}}_{DI}(\phi)$, and a reduced sharp interface energy $\widetilde{\mathcal{E}}_{SI}(\gamma)$ in terms of only $\phi$ and $\chi_\gamma$. We prove that a suitable minimisation problem of the energy \eqref{eqn-diffuse-energy} coincides with the associated minimisation problem of the reduced diffuse energy $\widetilde{\mathcal{E}}_{DI}(\phi)$. A similar result is shown for the sharp interface energies. Returning to the reduced diffuse interface energy $\widetilde{\mathcal{E}}_{DI}(\phi)$, we calculate its $\Gamma-$limit by showing it can be written as the  as the Modica-Mortola functional plus a continuous perturbation. Finally, we relate the two approaches by showing that minimisers of the $\Gamma-$limit coincide with minimisers of the reduced sharp interface energy $\widetilde{\mathcal{E}}_{SI}(\gamma)$.

\item
Secondly, since gradient flow methods are often used to numerically investigate critical points, we consider a gradient flow of \eqref{eqn-diffuse-energy} with conserved Allen-Cahn dynamics which was considered in \cite{elliott_hatcher_2020}. Again, we apply a reduction method  which enables us to write the fourth order equation for the height function as a second order equation, but at the cost that the reduced order equation contains a non-local operator. A formal asymptotic analysis of the limit $\epsilon \to 0$ for more general surfaces was performed in \cite{elliott2010surface} but only for the equilibrium equations. Here we perform it for the time-dependent problem and show how the non-local term which arises from this reduction method can be dealt with. We show that the resulting free boundary problem coincides with a corresponding conserved $L^2$-gradient flow for the sharp interface energy \eqref{eqn-sharp-interface-energy}.

\end{enumerate}

\subsection{Background}
Examples of domains  are lipid rafts. These are small (10-200nm), heterogeneous domains which compartmentalise cellular processes and are enriched with various molecules such as cholestorol and sphingolipids, and which can form larger platforms through protein-protein and protein-lipid interactions \cite{pike2006rafts}. They were first introduced by Simons in \cite{simons1997functional}, but have received large academic interest since, for example see \cite{dimova2019giant,goni2019rafts,schmid2017physical,sezgin2017mystery} and their references.
For technical reasons, direct microscopic detection of lipid rafts has not been possible. However, domain formation  has been observed on large artificial membranes  for which the curvature of the membrane plays an important role \cite{baumgart2003imaging, rinaldin2020geometric}. Numerical simulations,  \cite{elliott_hatcher_2020}, of the model considered in this paper display domain formation similar to those occurring in experiments, \cite{baumgart2003imaging}.

The coupling of the elastic energy to a composition field was first considered by Leibler in \cite{leibler1986curvature}. More recently it  has been considered using computational and formal asymptotic perspectives \cite{elliott2010surface,elliott2010modeling,elliott2013computation},  applying a bifurcation analysis \cite{healey2017symmetry},  addressing non-equilibrium properties such as dissipation effects \cite{tozzi2019out}, and  calculating the $\Gamma-$limit in the axisymmetric case\cite{helmers2011snapping,helmers2013kinks,helmers2015convergence}. Note that this last example differs from our work since the biomembranes are only assumed to be $C^0$, which allows for kinks across the interface between domains.

Our work here extends that of Ren and Wei in \cite{ren2004soliton}, who determine the $\Gamma-$limit in the one dimensional  approximately planar case, although they limit themselves to only considering a one-dimensional problem. This differs from our work which is for two-dimensional approximately spherical surfaces. 
Our work also differs from that of Fonseca et al.  in \cite{fonseca2016domain}, who focus on surface tension effects and consider $\Gamma-$convergence of an approximately planar surface but for a different parameter regime.

\subsection{Outline}
The outline for the rest of this paper is as follows.  In Section \ref{Sec:Notation} we briefly cover some notation and preliminaries needed for this paper. In Section \ref{Sec:Diffuse} we derive the reduced diffuse interface energy $\widetilde{\mathcal{E}}_{DI}(\phi)$ and calculate it's $\Gamma-$limit. We then state regularity results for minimisers of the $\Gamma-$limit. In Section \ref{sec:varSI(main)} we calculate the Euler-Lagrange equations for the sharp interface energy, and use these to obtain the reduced sharp interface energy $\widetilde{\mathcal{E}}_{SI}(\gamma)$ which coincides with the $\Gamma-$limit obtained in Section \ref{Sec:Diffuse} for suitably regular solutions. In Section \ref{Sec-Formal} we perform the formal asymptotic analysis for the diffuse interface gradient flow equations and show the resulting free boundary problems coincides with the corresponding sharp interface gradient flow equations. Finally, in Section \ref{Sec-Conclusion} we finish with some concluding remarks.

%%%%%%%%%%%%%%%%%%%%%%%%%%%%%%%%%%%%%%%%%%%%%%%%%%%%%%%%%%%%%%%%%%%%%%%%%%%%%%%%%%%%%

\section{Notation and preliminaries}
\label{Sec:Notation}

{
	Here we outline some important calculus results for stationary and evolving surfaces. 
}
For a thorough treatment of the material covered here we refer the reader to \cite{dziuk2013finite}.

Although throughout the paper $\Gamma$ is the sphere with radius $R$, here, in this section,  we present some notation for general   oriented two-dimensional hypersurfaces $\Gamma$ that are smooth with smooth boundary (unless stated otherwise). 
Suppose $x\in\Gamma$ and $U$ is an open subset containing $x$. Then given a function $u\in C^1(U)$ we define the surface gradient $\nabla_\Gamma u(x)$ of $u$ at $x$ by
\begin{align}
\nabla_\Gamma u(x)=\nabla u(x)-(\nabla u(x)\cdot \nu(x))\nu(x),
\end{align}
where $\nu$ is a smooth unit normal field to $\Gamma$. Note that this derivative depends on the values of $u$ on $\Gamma$ only. Denoting it's components by
\begin{align}
\nabla_\Gamma u=(\underline{D}_1u,\underline{D}_2u,\underline{D}_3u).
\end{align}
we can also define the Laplace-Beltrami operator of $u$ at $x$ by
\begin{align}
\Delta_\Gamma u(x)=\sum_{i=1}^{3}\underline{D}_i\underline{D}_i u(x)
\end{align}
provided that $u \in C^2(U)$. We define the Lebesgue space $L^p(\Gamma)$ for $p\in[1,\infty)$ to be the space of functions which are measurable with respect to the surface measure $\mathrm{d}\Gamma$ and have finite norm
\begin{align}
\|u\|_{L^p(\Gamma)}=\left(\int_{\Gamma}^{}|u|^p \:\mathrm{d}\Gamma\right)^\frac{1}{p}.
\end{align}
We say a function $u\in L^1(\Gamma)$ has the weak derivative $v_i=\underline{D}_iu$, if for every function $\phi\in C^1_0(\Gamma)$ we have the relation
\begin{align} \label{eqn-weakderivative}
\int_{\Gamma}^{}u\underline{D}_i\phi \:\mathrm{d}\Gamma=-\int_{\Gamma}^{}\phi v_i \:\mathrm{d}\Gamma+\int_{\Gamma}^{}u\phi H\nu_i\:\mathrm{d}\Gamma,
\end{align}
where $H$ is the mean curvature of $\Gamma$.

We define the Sobolev space $W^{1,p}(\Gamma)$ and Hilbert spaces $H^1(\Gamma)$ and $H^2(\Gamma)$ by
\begin{align}
W^{1,p}(\Gamma):&=\left\{f\in L^p(\Gamma):f\text{ has weak derivatives }\underline{D}_if\in L^p(\Gamma), i\in\{1,2,3\}\right\},
\\
H^1(\Gamma):&=\left\{f\in L^2(\Gamma):f\text{ has weak derivatives }\underline{D}_if\in L^2(\Gamma), i\in\{1,2,3\}\right\},
\\
H^2(\Gamma):&=\left\{f\in H^1(\Gamma):f\text{ has weak derivatives }\underline{D}_i\underline{D}_jf\in L^2(\Gamma), i,j\in\{1,2,3\}\right\}.
\end{align}
In additional we say a function $f\in L^1(\Gamma)$ has bounded variation and write $f\in BV(\Gamma)$ if
\begin{align}
|Df|(\Gamma):=\sup_{\eta\in C^1_c(\Gamma;\mathbb{R}^3)}\left\{\int_{\Gamma}f\nabla_\Gamma\cdot \eta\:{\rm d}\Gamma :|\eta|\leq 1\right\}<\infty
\end{align}
Here, $|Df|(\Gamma)$ is known as the total variation of $f$. We will use the notation $BV(\Gamma;\{-1,1\})$ to denote a function of bounded variation on $\Gamma$ which only takes values $\pm 1$.

Integration by parts on bounded $C^2-$ hypersurfaces reads (Theorem 2.10 and 2.14, \cite{dziuk2013finite}):
	\begin{align}
	\int_{\Gamma}\nabla_\Gamma\cdot f\:\mathrm{d}\Gamma&=\int_{\Gamma}f\cdot H\nu \mathrm{d}\Gamma+\int_{\partial\Gamma}f\cdot\nu_{\partial\Gamma}\:\:\mathrm{d}(\partial\Gamma) \label{eqn-ByParts1} \\
	\int_{\Gamma}\nabla_\Gamma \eta\cdot\nabla_\Gamma v \:\:{\rm d}\Gamma&=-\int_{\Gamma}\eta\Delta_\Gamma v\:\: {\rm d}\Gamma+\int_{\partial\Gamma}\eta\nabla_\Gamma v\cdot\nu_{\partial\Gamma}\:\:\mathrm{d}(\partial\Gamma). \label{eqn-ByParts2}
	\end{align}
	for $f\in W^{1,1}(\Gamma,\mathbb{R}^3)$, $\eta\in H^1(\Gamma)$, $v\in H^2(\Gamma)$ and where $\nu_{\partial\Gamma}$ denotes the conormal to $\gamma$ .

%%%%%%%%%%%%%%%%%%%%%%%%%%%%%%%%%%%%%%%%%%%%%%%%%%%%%%%%%%%%%%%%%%%%%%%%%%%%%%%%%%%%%%%%%%%%%%%%%%%%%%%%%%%%%%%%%%%%%%%%%%%%%%%%%%%%%%%%%%%%%%%%%%%%%%%%%%%%%

\section{Diffuse interface energy  minimisation}
\label{Sec:Diffuse}
\subsection
	{Diffuse interface minimisers}
	
	Let  $\Gamma$  be the sphere of radius $R$ and consider the diffuse interface energy $\mathcal{E}_{DI}(u,\phi)$ as given in \eqref{eqn-diffuse-energy} for $u\in H^2(\Gamma)$, $\phi\in H^1(\Gamma)$ and equal to $+\infty$ 
	{
		if $u \in L^2(\Gamma) \backslash H^2(\Gamma)$ or $\phi\in L^1(\Gamma)\backslash H^1(\Gamma)$.
	}
	
	For $(u,\phi)\in H^2(\Gamma)\times H^1(\Gamma)$, consider the constraints  ($\dashint_\Gamma:=\frac{1}{|\Gamma|}\int_\Gamma$)
	\begin{align}
	\label{eqn-constraint-DI}
	&\dashint_\Gamma \phi\:{\rm d}\Gamma=\alpha,&
	&\int_{\Gamma} u \:{\rm d}\Gamma =0,&
	&\int_{\Gamma} \nu_i 
	u\:{\rm d}\Gamma =0\qquad\text{ for }i\in\{1,2,3\}.
	\end{align}
	Here, $\alpha\in[-1,1]$ and $\nu_i$ for $i\in\{1,2,3\}$ are the three components of the normal $\nu(x)=x/|x|$ at $x\in \Gamma$. 
	We define the space $\mathcal{K}_{DI}$ as follows
	\begin{align}
	\mathcal{K}_{DI}:=\left\{(u,\phi)\in H^2(\Gamma)\times H^1(\Gamma):(u,\phi)\text{ satisfy }\eqref{eqn-constraint-DI}\right\}
	\end{align}
	The diffuse interface minimisation problem is:
		\begin{prb}
			\label{prob-diffuse}
			Find $(u^*,\phi^*)\in \mathcal{K}_{DI}$ such that
						\begin{align}
						\mathcal{E}_{DI}(u^*,\phi^*)=\inf_{(u,\phi)\in\mathcal{K}_{DI}}\mathcal{E}_{DI}(u,\phi).
						\end{align}
		\end{prb}

	The first condition of \eqref{eqn-constraint-DI} corresponds to a conservation of mass constraint  on the order parameter $\phi$, the second condition is a volume constraint and relates to impermeability of the membrane, and the third condition is a nullspace constraint related to a translation invariance property of the membrane energy. The functional is coercive over this set,  see \cite{elliott_hatcher_2020}, so there exist minimisers. Proceeding as in \cite{elliott_hatcher_2020}, we may calculate the first variation of \eqref{eqn-diffuse-energy} to be
	
	\begin{multline}
	\label{eqn-1st-var-DI}
	\left<\mathcal{E}^\prime_{DI}(u,\phi),(\zeta,\eta)\right> = \int_{\Gamma} \bigg(\kappa\Delta_\Gamma u\Delta_\Gamma \zeta +\left(\sigma-\frac{2\kappa}{R^2}\right)\nabla_\Gamma u\cdot\nabla_\Gamma \zeta 
	-\frac{2\sigma}{R^2} u\zeta +\kappa\Lambda\phi\Delta_\Gamma \zeta+\frac{2\kappa\Lambda}{R^2}\phi\zeta \\ + \kappa\Lambda\Delta_\Gamma u \eta+\frac{2\kappa\Lambda}{R^2}u  \eta  +\kappa\Lambda^2\phi\eta +{b\epsilon}\nabla_\Gamma\phi\cdot\nabla_\Gamma \eta +\frac{b}{\epsilon}W^\prime(\phi)\eta\Bigg{)} \:{\rm d}\Gamma. 
	\end{multline}
and the variations of the constraints \eqref{eqn-constraint-DI}, solve
	\begin{align}
	\label{eqn-Var-constraint-DI}
	&\int_\Gamma \eta\:{\rm d}\Gamma=0,&
	&\int_{\Gamma} \zeta \:{\rm d}\Gamma =0,&
	&\int_{\Gamma} \nu_i 
	\zeta\:{\rm d}\Gamma =0\qquad\text{ for }i\in\{1,2,3\},	\end{align}
for all $\zeta\in H^2(\Gamma)$ and for all $\eta\in H^1(\Gamma)$. For further details see \cite{elliott_hatcher_2020}, Section 4.1.  The  Lagrange multipliers corresponding to the constraints  for the constrained optimisation problem, Problem \ref{prob-diffuse}, may be determined easily and we obtain the following form of Euler-Lagrange equations:
	\begin{align}
	\label{eqn-diffuse-EL2}
	\frac{\hat b}{\epsilon}\left(W^\prime(\phi^*)-\dashint_\Gamma W^\prime(\phi^*)\:{\rm d}\Gamma\right)-\hat b\epsilon\Delta_\Gamma \phi^*+\kappa\Lambda\Delta_\Gamma u^*+\frac{2\kappa\Lambda}{R^2}u^* +\kappa\Lambda^2(\phi^*-\alpha) =0,&
	\\
	\label{eqn-diffuse-EL}
	\left(\Delta_\Gamma+\frac{2}{R^2}\right)\left(\kappa\Delta_\Gamma u^*-\sigma u^*+\kappa\Lambda(\phi^*-\alpha)\right)=0.&
	\end{align}

\subsection{Reduced diffuse interface energy}
\label{Reduced diffuse interface energy}
The Euler-Lagrange equation \eqref{eqn-diffuse-EL} motivates  seeking a reduced order PDE. 
We begin by noting that if 
\begin{align}
-\Delta_\Gamma z -\frac{2}{R^2}z=0,
\end{align}
then $z$ is an eigenfunction of $-\Delta_\Gamma$ with eigenvalue $\frac{2}{R^2}$. Such eigenfunctions $z$ belong to the space $\text{span}\{\nu_1,\nu_2,\nu_3\}$ (see \cite{elliott2016small}).  It is convenient to work with the $L^2(\Gamma)$ orthogonal complement of  $\text{span}\{1,\nu_1,\nu_2,\nu_3\}$ and we   set
 $$ S:=\text{span}\{1,\nu_1,\nu_2,\nu_3\}^\perp.$$
  Note that if $\eta \in S\cup H^2(\Gamma)$ then, since  $\nu_i$ are eigenfunctions of $-\Delta_\Gamma$, a short calculation shows that  $\Delta_\Gamma \eta \in S$.
 Also  it is convenient to define  an operator $\mathcal G:S\rightarrow H^2(\Gamma)\cap S$ where for each $\eta\in S$ 
 $\mathcal{G}(\eta)$ is the  unique function in $H^2(\Gamma) \cap \mathcal{S}$ satisfying 
\begin{align}
\label{eqn-green}
\left({\sigma}-\kappa\Delta_\Gamma\right)\mathcal{G}(\eta)=\kappa\Lambda\eta.
\end{align}

Let $(u^*,\phi^*)$ be a diffuse interface energy minimiser.  It follows from \eqref{eqn-diffuse-EL}  that 
$-\kappa\Delta_\Gamma u^*+\sigma u^*-\kappa\Lambda( (\phi^*-\alpha)\in \text{span}\{\nu_1,\nu_2,\nu_3\}$
 so we may write\begin{align}
\label{eqn-remark3.2-1}
-\kappa\Delta_\Gamma u^*+\sigma u^*=\kappa\Lambda( (\phi^*-\alpha)- &\beta_{DI} )& & \text{on } \Gamma.
\end{align}
 where  $\beta_{DI} \in\text{span}\{\nu_1,\nu_2,\nu_3\}$.
 Denoting by  $\mathbf P:L^2(\Gamma)\rightarrow S$   the orthogonal projection onto $S$, we find after applying it to \eqref{eqn-remark3.2-1}

\begin{align}
\label{eqn:red-ord-2}
\left(\sigma -\kappa\Delta_\Gamma \right)u^*=\kappa\Lambda\mathbf{P}(\phi^*), 
\end{align}
so
\begin{align}
\label{eqn-Defhf}
u^*=\mathcal G(\mathbf P(\phi^*))
\end{align}

This motivates  defining  the \emph{reduced diffuse interface energy}, $\widetilde{\mathcal{E}}_{DI}(\phi)$ 
\begin{align}\label{eqn-refucedenergy}
\begin{split}
\widetilde{\mathcal{E}}_{DI}(\phi):=&\mathcal{E}_{DI}(\mathcal{G}(\mathbf{P}(\phi)),\phi)
\\=&
\int_{\Gamma}  \bigg(\frac{\kappa}{2}\left( \Delta_{\Gamma} \mathcal{G}(\mathbf{P}(\phi))+\frac{2\mathcal{G}(\mathbf{P}(\phi))}{R^2}+\Lambda \phi\right)^2 
\\
&-\left(\frac{\kappa}{R^2}+ \frac{\sigma }{2}\right)\mathcal{G}(\mathbf{P}(\phi)) \left(\Delta_{\Gamma} +\frac{2}{R^2}\right)\mathcal{G}(\mathbf{P}(\phi))
+\frac{b\epsilon}{2} |\nabla_{\Gamma}\phi|^2 + \frac{b}{\epsilon} W(\phi)\bigg)\:{\rm d}\Gamma
\\
=&\int_{\Gamma}\bigg(\frac{\kappa}{2}\left(\Delta_\Gamma-\frac{\sigma}{\kappa}\right)\mathcal{G}(\mathbf{P}(\phi))\left(\Delta_\Gamma+\frac{2}{R^2}\right)\mathcal{G}(\mathbf{P}(\phi))+\frac{\kappa\Lambda^2\phi^2}{2}
\\
&\qquad+{\kappa\Lambda}\phi\left(\Delta_\Gamma+\frac{2}{R^2}\right)\mathcal{G}(\mathbf{P}(\phi))+\frac{b\epsilon}{2}|\nabla_{\Gamma}\phi|^2+\frac{b}{\epsilon}W(\phi)\bigg)\:{\rm d}\Gamma
\end{split}
\end{align}
 and the admissible set
\begin{align}
\widetilde{\mathcal{K}}_{DI}:=&\left\{\phi\in H^1(\Gamma):\dashint_\Gamma \phi\:{\rm d}\Gamma=\alpha\right\}.
\end{align}
Using \eqref{eqn:red-ord-2} and that $\left(\Delta_\Gamma+\frac{2}{R^2}\right)\mathcal{G}(\mathbf{P}(\phi))\in {S}$ we can simplify \eqref{eqn-refucedenergy} to obtain that
\begin{align}
\label{eqn-refucedenergy2}
\widetilde{\mathcal{E}}_{DI}(\phi)=\int_{\Gamma}\left(\frac{\kappa\Lambda}{2}\mathbf{P}(\phi)\left(\Delta_\Gamma+\frac{2}{R^2}\right)\mathcal{G}(\mathbf{P}(\phi))+\frac{b\epsilon}{2}|\nabla_{\Gamma}\phi|^2+
\frac{b}{\epsilon}W(\phi)+\frac{\kappa\Lambda^2\phi^2}{2}\right)\:{\rm d}\Gamma.
\end{align}
We write the constrained minimisation problem for the reduced energy below.
\begin{prb}
	\label{prob-diffuse-reduced}
	Find $\widetilde{\phi^*}\in \widetilde{\mathcal{K}}_{DI}$ such that
	\begin{align}
	\widetilde{\mathcal{E}}_{DI}(\widetilde{\phi^*})=\inf_{\phi \in \widetilde{\mathcal{K}}_{DI}}\widetilde{\mathcal{E}}_{DI}(\phi).
	\end{align}
\end{prb}
We note that a minimiser of Problem \ref{prob-diffuse-reduced} is equivalent to finding a minimiser of Problem \ref{prob-diffuse} since
\begin{align}
\label{eqn:red-ord-3}
\mathcal{E}_{DI}(u^*,\phi^*) \leq \mathcal{E}_{DI}(\mathcal{G}(\mathbf{P}(\widetilde{\phi}^*)),\widetilde{\phi}^*) = \widetilde{\mathcal{E}}_{DI}(\widetilde{\phi}^*) \leq \widetilde{\mathcal{E}}_{DI}(\phi^*) = \mathcal{E}_{DI}(\mathcal{G}(\mathbf{P}(\phi^*)),\phi^*) = \mathcal{E}_{DI}(u^*,\phi^*).
\end{align}

\subsection{$\Gamma-$convergence}
\label{Sec:Gamma-conv}

We will now calculate the $\Gamma-$limit of Problem \ref{prob-diffuse-reduced} as $\epsilon \to 0$. First we  decompose  the energy \eqref{eqn-refucedenergy2} and write
\begin{align}
\widetilde{\mathcal{E}}_{DI}(\phi)=\mathcal{J}_\epsilon(\phi)+\mathcal{K}(\phi),
\end{align}
where $\mathcal{J}_\epsilon(\phi)$ contains the local, $\epsilon$-dependent part of the energy and $\mathcal{K}(\phi)$ is a non-local, $\epsilon$-independent perturbation, given by
\begin{align}
\mathcal{J}_\epsilon(\phi):=&
\begin{cases}
\int_{\Gamma}\frac{b\epsilon}{2}|\nabla_{\Gamma}\phi|^2+\frac{b}{\epsilon}W(\phi)\:{\rm d}\Gamma&\text{for }\phi\in H^1(\Gamma), \\
+\infty&\text{for }\phi\in L^1(\Gamma)\backslash H^1(\Gamma), 
\end{cases} \\
\label{eqn-K-pre}
\mathcal{K}(\phi):=&
\begin{cases}
\int_{\Gamma}\frac{\kappa\Lambda}{2}\mathbf{P}(\phi)\left(\Delta_\Gamma+\frac{2}{R^2}\right)\mathcal{G}(\mathbf{P}(\phi))+\frac{\kappa\Lambda^2\phi^2}{2}\:{\rm d}\Gamma&\text{for }\phi\in L^2(\Gamma), \\
+\infty&\text{for }\phi\in L^1(\Gamma)\backslash L^2(\Gamma). 
\end{cases}
\end{align}

To calculate the $\Gamma-$limit we reformulate \eqref{eqn-K-pre}. Since
	\begin{align}
	\label{eqn-reformulateK}
	\begin{split}
	\left(\Delta_\Gamma+\frac{2}{R^2}\right)\mathcal{G}(\mathbf{P}(\phi))&=\left( \Big{(} \frac{\sigma}{\kappa}+\frac{2}{R^2} \Big{)} - \Big{(} -\Delta_\Gamma+\frac{\sigma}{\kappa} \Big{)} \right) \mathcal{G}(\mathbf{P}(\phi))
	\\
	&= \Big{(} \frac{\sigma}{\kappa}+\frac{2}{R^2} \Big{)} \mathcal{G}(\mathbf{P}(\phi))-\Lambda \mathbf{P}(\phi)
	\end{split}
	\end{align}
	it follows that 
	\begin{align}
	\label{eqn-K}
	\mathcal{K}(\phi)=&
	\begin{cases}
	\int_{\Gamma}\frac{\kappa\Lambda}{2}\mathbf{P}(\phi)\left(\frac{\sigma}{\kappa}+\frac{2}{R^2}\right)\mathcal{G}(\mathbf{P}(\phi))+\frac{\kappa\Lambda^2}{2}(\phi^2-(\mathbf{P}(\phi))^2)\:{\rm d}\Gamma&\text{for }\phi\in L^2(\Gamma), \\
	+\infty&\text{for }\phi\in L^1(\Gamma)\backslash L^2(\Gamma). 
	\end{cases}
	\end{align}
	Calculating the $\Gamma-$limit is then straightforward:
\begin{pro}
	\label{Prop-GammaLimit}
	The $\Gamma-$limit of $\widetilde{\mathcal{E}}_{DI}(\phi)=\mathcal{J}_\epsilon(\phi)+\mathcal{K}(\phi)$ is given by $\widetilde{\mathcal{E}}_0(\phi):=\mathcal{J}_0(\phi)+\mathcal{K}(\phi)$ with 
	\begin{align} \label{eqn-defJ0}
	\mathcal{J}_0(\phi)=\begin{cases}
	\frac{bc_W}{2}|D\phi|(\Gamma)&\text{for }\phi\in BV(\Gamma;\{-1,1\}), \\
	+\infty&\text{for }\phi\in L^1(\Gamma)\backslash BV(\Gamma;\{-1,1\}), 
	\end{cases}
	\end{align}
	where $c_W=\int_{-1}^{1}\sqrt{2W(s)}\:{\rm d}s=\frac{2\sqrt{2}}{3}$.
\end{pro}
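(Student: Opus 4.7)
The strategy is to exploit the decomposition $\widetilde{\mathcal{E}}_{DI} = \mathcal{J}_\epsilon + \mathcal{K}$ and invoke the classical principle that $\Gamma$-convergence is preserved under addition of a continuous perturbation: in the $L^1(\Gamma)$-topology, if $\mathcal{J}_\epsilon \xrightarrow{\Gamma} \mathcal{J}_0$ and $\mathcal{K}$ is continuous on the class of functions with uniformly bounded Modica-Mortola energy, then $\mathcal{J}_\epsilon + \mathcal{K} \xrightarrow{\Gamma} \mathcal{J}_0 + \mathcal{K}$. The ingredient $\mathcal{J}_\epsilon \xrightarrow{\Gamma} \mathcal{J}_0$ on the sphere is the classical Modica-Mortola statement, which transfers from the Euclidean setting via a partition-of-unity/local-chart argument or can be cited from surface-based references.

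The main work is to establish continuity of $\mathcal{K}$. Using the reformulation \eqref{eqn-K}, observe that the orthogonal projection $\mathbf{P}$ is bounded on $L^2(\Gamma)$ and that the Green's operator $\mathcal{G}: S \to H^2(\Gamma) \cap S$ is bounded by elliptic regularity applied to \eqref{eqn-green}. Consequently the two quadratic expressions appearing in \eqref{eqn-K} are continuous on $L^2(\Gamma)$, so $\mathcal{K}$ itself is continuous on $L^2(\Gamma)$. To upgrade this to $L^1$-continuity on energy-bounded sequences, I would exploit the double-well: $\mathcal{J}_\epsilon(\phi_\epsilon) \leq C$ forces $\int_\Gamma (\phi_\epsilon^2 - 1)^2 \,{\rm d}\Gamma \leq C'\epsilon$, so $\phi_\epsilon$ is uniformly bounded in $L^4(\Gamma)$; interpolation between $L^1$ and $L^4$ then promotes $L^1$-convergence to $L^2$-convergence, whence $\mathcal{K}(\phi_\epsilon) \to \mathcal{K}(\phi)$.

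With these pieces in place, the liminf and limsup inequalities follow cleanly. For liminf, given $\phi_\epsilon \to \phi$ in $L^1(\Gamma)$ we may assume $\mathcal{J}_\epsilon(\phi_\epsilon)$ is bounded (otherwise the inequality is trivial), so the upgrade above gives $\mathcal{K}(\phi_\epsilon) \to \mathcal{K}(\phi)$, and combining this with $\liminf \mathcal{J}_\epsilon(\phi_\epsilon) \geq \mathcal{J}_0(\phi)$ from Modica-Mortola yields the claim. For limsup, take the classical optimal-profile recovery sequence for $\mathcal{J}_0$, which takes values in $[-1,1]$ and converges to $\phi$ in $L^1(\Gamma)$; the uniform $L^\infty$ bound again yields $L^2$-convergence, and continuity of $\mathcal{K}$ closes the argument. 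The main obstacle I anticipate is the interplay between the $L^2$-based continuity of $\mathcal{K}$ and the $L^1$-based Modica-Mortola framework, but as outlined the double-well term forces enough integrability on bounded-energy sequences to bridge the two cleanly, leaving no serious difficulty beyond bookkeeping.
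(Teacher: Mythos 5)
Your proposal is correct and follows essentially the same route as the paper: cite the Modica--Mortola result for $\mathcal{J}_\epsilon$, establish continuity of $\mathcal{K}$ via boundedness of $\mathbf{P}$ and elliptic regularity for $\mathcal{G}$, and invoke stability of $\Gamma$-convergence under continuous perturbations. If anything you are more careful than the paper's one-line argument, which simply asserts that $\mathcal{K}$ is continuous without addressing the mismatch between the $L^1(\Gamma)$ topology of the $\Gamma$-limit and the $L^2(\Gamma)$-based continuity of $\mathcal{K}$; your use of the double-well bound and interpolation to upgrade $L^1$- to $L^2$-convergence along energy-bounded sequences fills exactly that gap.
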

\begin{proof}
	It is known (for example see \cite{alberti2000variational,garcke2013curvature,modica1987gradient}) that $\mathcal{J}_\epsilon(\phi)$ $\Gamma-$converges to the functional $\mathcal{J}_0(\phi)$.
	Furthermore by considering \eqref{eqn-K} and using elliptic regularity, it follows that $\mathcal{K}$ is a continuous functional. 
	$\Gamma-$convergence is stable under continuous perturbations \cite{braides2002gamma}, Remark 1.7. Therefore $\mathcal{J}_\epsilon+\mathcal{K}$, $\Gamma-$converges to $\mathcal{J}_0+\mathcal{K}$ as $\epsilon\to 0$.
\end{proof}

\subsection{Minimisers of the $\Gamma-$limit}

{
	We now consider the following minimisation problem:
	\begin{prb}
		\label{Prob:GL}
		Find $\phi^*\in \mathcal{D}:=\{\eta\in BV(\Gamma;\{-1,1\}):\dashint_\Gamma\eta\:{\rm d}\Gamma=\alpha\}$ such that
		\begin{align}
		\widetilde{\mathcal{E}}_0(\phi^*)=\inf_{\phi\in\mathcal{D}}\widetilde{\mathcal{E}}_0(\phi),
		\end{align}
		where $\widetilde{\mathcal{E}}_0(\phi)=\mathcal{J}_0(\phi)+\mathcal{K}(\phi)$.
	\end{prb}
	This minimisation problem, Problem \ref{Prob:GL}, lies in the class of the \emph{liquid drop problem} as formulated in \cite{maggi2012sets}, Section 19. We comment that since in two-dimensions functions of bounded variation continuously embed in $L^2$ (Corollary 3.49, \cite{ambrosio2000functions}), we can interpret $\phi\in BV(\Gamma;\{-1,1\})$ as a function in $L^2(\Gamma)$. Therefore, by elliptic regularity we have that $\mathcal{G}(\mathbf{P}(\phi))\in H^2(\Gamma)$. Since $H^2(\Gamma)\hookrightarrow C^0(\Gamma)$ in two dimensions then the integrand of $\mathcal{K}$ satisfies the property
	\begin{align}
	\frac{\kappa\Lambda}{2}\phi\left(\frac{\sigma}{\kappa}+\frac{2}{R^2}\right)\mathcal{G}(\mathbf{P}(\phi))+\frac{\kappa\Lambda^2}{2}\left(\phi^2-\left(\mathbf{P}(\phi)\right)^2\right)\in L^\infty(\Gamma).
	\end{align}
	Hence, by Theorem 19.5 in \cite{maggi2012sets} a minimiser $\phi^*$ exists to Problem \ref{Prob:GL}. Furthermore, by applying regularity results (see part III of \cite{maggi2012sets}) we can define $\Gamma^{(1)}:=\{\phi^*=-1\}$, $\Gamma^{(2)}=\{\phi^*=+1\}$, and obtain that $\gamma^*$, the common boundary of $\Gamma^{(1)}$ and $\Gamma^{(2)}$,  has H{\"o}lder regularity of $C^{1,\beta}$ for $\beta \in [0,\frac{1}{2})$. In particular see Theorem 21.8 in \cite{maggi2012sets}. 
	
	By introducing the function $\chi_{\gamma^*}:BV(\Gamma)\to\mathbb{R}$, where
	\begin{align}
	\label{eqn-chi-gamma}
	\chi_{\gamma^*}:=\begin{cases}
	-1&\text{on }\Gamma^{(1)},\\
	+1&\text{on }\Gamma^{(2)},
	\end{cases}
	\end{align}
	we can reformulate the $\Gamma-$limit energy $\mathcal{E}_0(\phi^*)$ to be in terms of the curve $\gamma^*$. Therefore, \eqref{eqn-defJ0} can be written as 
	\begin{align}
	\mathcal{J}_0(\gamma^*)=\int_{\gamma^*}\widehat{b}\:{\rm d}\gamma^*,
	\end{align}
	(see Remarks 12.2 and 12.3 in \cite{maggi2012sets}), and we write \eqref{eqn-K-pre} as
	\begin{align}
	\mathcal{K}(\gamma^*)=\int_{\Gamma}\frac{\kappa\Lambda}{2}\chi_{\gamma^*}\left(\Delta_\Gamma+\frac{2}{R^2}\right)\mathcal{G}(\mathbf{P}(\chi_{\gamma^*}))+\frac{\kappa\Lambda^2\chi_{\gamma^*}^2}{2}\:{\rm d}\Gamma.
	\end{align}
	\begin{rem}
		In fact we can prove greater regularity for the curve $\gamma^*$. Using interior regularity we
		obtain that $\mathcal{G}(\mathbf{P}(\phi))\in C^\infty(\Gamma^{(i)}\backslash\gamma^*)$ for $i\in\{1,2\}$ and hence the integrand of $\mathcal{K}$ satisfies
		\begin{align}
		\label{eqn-SI-smooth}
		\frac{\kappa\Lambda}{2}\chi_{\gamma^*}\left(\Delta_\Gamma+\frac{2}{R^2}\right)\mathcal{G}(\mathbf{P}(\chi_{\gamma^*}))+\frac{\kappa\Lambda^2\chi_{\gamma^*}^2}{2}\in C^\infty(\Gamma^{(i)}\backslash\gamma^*).
		\end{align}
		for $i\in\{1,2\}$. Using \eqref{eqn-SI-smooth} and applying a bootstrapping argument it can be shown that $\gamma^*$ is $C^\infty$, see Section 27 in \cite{maggi2012sets}, and also \cite{philippis2015regularity} and its references. Moreover, using boundary regularity we find that 
		\begin{align}
		\mathcal{G}(\mathbf{P}(\chi_{\gamma^*}))\in C^\infty\left(\overline{\Gamma^{(1)}}\right)\cap C^\infty\left(\overline{\Gamma^{(2)}}\right)\cap C^1(\Gamma).
		\end{align}
	\end{rem}

\section{Sharp interface optimisation problem}
\label{sec:varSI(main)}

The objective of this section is to derive the Euler-Lagrange equations of the sharp interface energy functional $\mathcal E_{SI}(\gamma,u)$ defined in \eqref{eqn-sharp-interface-energy}. Using these to eliminate the membrane height, a reduced energy functional is derived. This is then shown to coincide with the $\Gamma$-limit of the reduced diffuse interface energy derived in the previous section. 

\subsection{Minimisation problem}
\label{sec:Admissible two-phase sharp interface surfaces}
We define $\mathcal{K}_{SI}$ to be the set of all pairs $(u,\gamma)$ satisfying:
\begin{itemize}
	\item $u:\Gamma\to\mathbb{R}$ is a height function such that $u\in H^2(\Gamma)$,
	\item {$\Gamma$ is decomposed as }
	$\Gamma=\Gamma^{(1)}\cup\gamma\cup\Gamma^{(2)}$, where $\gamma$ consists of finitely many, $C^1$ closed curves and is the common boundary of
	hypersurfaces $\Gamma^{(1)}$ and $\Gamma^{(2)}$,
\end{itemize}
and such that $(\gamma,u)$ satisfy the constraints,
\begin{align}
\label{eqn-constraint1}
C_1(\gamma)&:=|\Gamma^{(1)}|-|\Gamma^{(2)}|+\alpha|\Gamma|=0, \\
\label{eqn-constraint2}
C_2(u)&:=\int_{\Gamma}u =0,\\
\label{eqn-constraint3}
\mathcal{N}_i(u)&:=\int_{\Gamma}u\nu_i=0,\qquad\text{ for }i\in\{1,2,3\}.
\end{align}
These constraints correspond to \eqref{eqn-constraint-DI} for the diffuse interface approach. 

We will use the notation of an upper index of the form $(\cdot)^{(1)}$ or $(\cdot)^{(2)}$ to indicate the limit of quantities on $\gamma$ approached from either $\Gamma^{(1)}$ or $\Gamma^{(2)}$ and use $[\cdot]^{(2)}_{(1)}=(\cdot)^{(2)}-(\cdot)^{(1)}$ to denote the jump of a quantity across $\gamma$. We define    $\nu_{\Gamma^{(i)}}$ to be the unit   conormal, tangential to $\Gamma^{(i)}$,  normal   to $\gamma$ and pointing out of $\Gamma^{(i)}$.  Since $\Gamma$ is $C^1$ we may introduce $\mu$ so that
\begin{align}
\mu := \nu_{\Gamma^{(1)}}=-\nu_{\Gamma^{(2)}}.
\end{align}
Furthermore, using that $H^2(\Gamma)\hookrightarrow C^0(\Gamma)$, we have that 
\begin{align}
\label{eqn:jump1}
\left[u\right]^{(2)}_{(1)}=&0&&\text{on }\gamma.
\end{align}
In addition, since $u\in H^2(\Gamma)$, then trace values of the first weak derivatives exist on $\gamma$ for the domains $\Gamma^{(1)}$ and $\Gamma^{(2)}$, and these trace values coincide (see Lemma A8.9, \cite{alt1992linear}). Therefore, we also have that 
\begin{align}
\left[\nabla_\Gamma u\cdot\mu\right]^{(2)}_{(1)}=&0&&a.e.\text{ on }\gamma.
\end{align}

\begin{prb}[Sharp interface minimisation problem]
	\label{prob-sharp}
	Find $(u^*,\gamma^*)\in\mathcal{K}_{SI}$ such that
	\begin{align}
	\mathcal{E}_{SI}(u^*,\gamma^*)=\inf_{(u,\gamma)\in\mathcal{K}_{SI}}\mathcal{E}_{SI}(u,\gamma).
	\end{align}
\end{prb}
Minimisers in $\mathcal K_{SI}$ of $\mathcal E_{SI}$ are critical points of the following  Lagrangian $\mathcal L_{SI}$.
\begin{defin} We define the sharp interface Lagrangian by 
	$$\mathcal L_{SI}(u,\gamma,\lambda):=\mathcal E_{SI}(u,\gamma)+\lambda_1C_1(\gamma)+\lambda_2C_2( u)+\sum_{i=1}^3\lambda_{i+2}\mathcal{N}_i(u)$$
	for $\gamma\in C^1$ an embedded curve on $\Gamma$, $u\in H^2(\Gamma)$ and $\lambda\in \mathbb R^5$.
\end{defin}

In the following  two subsections we will define and calculate the first variation of the sharp interface energy and the constraint functionals. Using the function $\chi_{\gamma}$ \eqref{eqn-chi-gamma}, and the geodesic curvature $H_\gamma$ defined by $H_\gamma = h \cdot \mu$ for curvature vector $h$, we derive the following result. 

\begin{pro}
	\label{Prop:SI-min}
	A  pair $(u^*,\gamma^*)\in\mathcal{K}_{SI}$ which minimises the sharp interface energy subject to the constraints \eqref{eqn-constraint1}-\eqref{eqn-constraint3}, and is sufficiently regular so that all the following terms are well defined, solves the following free boundary problem 
	\begin{align}
	\label{eqn-EL-gamma1}
	\begin{split}
	&\left(\Delta_\Gamma+\frac{2}{R^2}\right)\left(\kappa\Delta_\Gamma u^*-\sigma u^*+\kappa\Lambda(\chi_{\gamma^*}-\alpha)\right)=0\end{split}\quad\text{on }\Gamma^{(1)}\cup\Gamma^{(2)},\\
	\label{eqn-EL-gamma}
	\begin{split}
	&\widehat{b}H_{\gamma^*} - \kappa\Lambda\left[\chi_{\gamma^*}\left(\Delta_ \Gamma u^*+\frac{2}{R^2}u^*\right)\right]^{(2)}_{(1)}=
	\\
	&\qquad\qquad\qquad+\dashint_{\gamma^*} \left(\widehat{b}H_{\gamma^*} - \kappa\Lambda\left[\chi_{\gamma^*}\left(\Delta_\Gamma u^*+\frac{2}{R^2}u^*\right)\right]^{(2)}_{(1)}\right)\:{\rm d}\gamma^*
	\end{split}\quad\text{on }\gamma^*,
	\end{align}
	with jump conditions
	\begin{align}
	\label{eqn-jumpconditions2}
	[\nabla_\Gamma \Delta_\Gamma u^*\cdot\mu]^{(2)}_{(1)}&=0,
	&
	[\Delta_\Gamma u^*]^{(2)}_{(1)}&=-2\Lambda \quad & \text{on } \gamma^*. 
	\end{align}
\end{pro}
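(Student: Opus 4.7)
The plan is to derive the stated free boundary problem as the necessary optimality conditions for critical points of the Lagrangian $\mathcal{L}_{SI}$, obtained by varying the height $u$ and the interface $\gamma$ independently.

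For the variation in $u$, I take a test function $\zeta\in H^2(\Gamma)$, split $\int_\Gamma e_m = \int_{\Gamma^{(1)}}+\int_{\Gamma^{(2)}}$ since $\chi_{\gamma}$ is piecewise constant, and integrate by parts twice on each piece using \eqref{eqn-ByParts1}--\eqref{eqn-ByParts2}. Combined with the Lagrange multipliers $\lambda_2,\lambda_{i+2}$ for the linear constraints $C_2,\mathcal{N}_i$, the interior terms assemble into the distributional identity
\[
(\Delta_\Gamma+\tfrac{2}{R^2})\bigl(\kappa\Delta_\Gamma u^*-\sigma u^*+\kappa\Lambda\chi_{\gamma^*}\bigr) = \lambda_2 + \sum_{i=1}^{3}\lambda_{i+2}\nu_i
\]
on $\Gamma^{(1)}\cup\Gamma^{(2)}$, analogous to \eqref{eqn-diffuse-EL}. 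Since the $\nu_i$ are eigenfunctions of $-\Delta_\Gamma$ with eigenvalue $2/R^2$, they lie in the kernel of $(\Delta_\Gamma+2/R^2)$; replacing $\chi_{\gamma^*}$ by $\chi_{\gamma^*}-\alpha$ absorbs the remaining constant $\lambda_2$ via the mean-zero property from \eqref{eqn-constraint1}, yielding \eqref{eqn-EL-gamma1}. The boundary contributions produced by the two integrations by parts on $\gamma$, after invoking the continuity of $u$ and of $\nabla_\Gamma u\cdot\mu$ across $\gamma$, reduce to
\[
\kappa\int_\gamma\bigl[\Delta_\Gamma u+\Lambda\chi_\gamma\bigr]^{(2)}_{(1)}\nabla_\Gamma\zeta\cdot\mu\,\mathrm{d}\gamma - \kappa\int_\gamma\bigl[\nabla_\Gamma\Delta_\Gamma u\cdot\mu\bigr]^{(2)}_{(1)}\zeta\,\mathrm{d}\gamma.
\]
The traces $\zeta|_\gamma$ and $\nabla_\Gamma\zeta\cdot\mu|_\gamma$ are independently prescribable as $\zeta$ ranges over $H^2(\Gamma)$, so both coefficients must vanish, producing the two jump conditions in \eqref{eqn-jumpconditions2}.

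For the variation in $\gamma$, I deform the curve by a smooth normal velocity $V:\gamma\to\mathbb{R}$ in the direction $\mu$. The line term differentiates to $\hat b\int_\gamma H_\gamma V\,\mathrm{d}\gamma$ by the standard first variation of arc length on $\Gamma$, and the constraint $C_1$ contributes $2\lambda_1\int_\gamma V\,\mathrm{d}\gamma$ through $\frac{\mathrm d}{\mathrm dt}\bigl(|\Gamma_t^{(1)}|-|\Gamma_t^{(2)}|\bigr)=2\int_\gamma V\,\mathrm{d}\gamma$. For the bulk energy, the elementary identity $e_m(+1,u,\Delta_\Gamma u)-e_m(-1,u,\Delta_\Gamma u)=2\kappa\Lambda A$ with $A:=\Delta_\Gamma u+2u/R^2$ reduces the contribution of the moving strip to $-2\kappa\Lambda\int_\gamma A\,V\,\mathrm{d}\gamma$. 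Because $A$ carries distinct traces $A^{(1)},A^{(2)}$ on $\gamma$, the correct value to use is the mean $\tfrac{1}{2}(A^{(1)}+A^{(2)})=\tfrac{1}{2}[\chi_\gamma A]^{(2)}_{(1)}$; this is justified by mollifying $u$ to a smooth $u_\varepsilon$, performing the classical variation, and passing to the limit $\varepsilon\to 0$. Setting the total variation to zero for arbitrary $V$ then gives
\[
\hat b H_\gamma - \kappa\Lambda\bigl[\chi_\gamma(\Delta_\Gamma u^*+2u^*/R^2)\bigr]^{(2)}_{(1)} + 2\lambda_1 = 0 \quad \text{on } \gamma^*,
\]
and averaging this identity over $\gamma^*$ eliminates $\lambda_1$ and produces \eqref{eqn-EL-gamma}.

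The delicate step is the variation in $\gamma$: because $u\in H^2(\Gamma)$ but $\Delta_\Gamma u$ genuinely jumps by $-2\Lambda$ across $\gamma^*$, the naive one-sided pointwise limits of the first variation of the bulk energy disagree, and only the symmetrised value $\tfrac12(A^{(1)}+A^{(2)})$ yields a well-defined derivative. Justifying this identification—either by the mollification argument above or by an equivalent distributional Gauss--Green computation—is the principal technical hurdle. Once it is in place, the remainder of the proof is routine algebra using the jump conditions established for $u$ and the identity $[\chi_\gamma A]^{(2)}_{(1)} = A^{(1)}+A^{(2)}$.
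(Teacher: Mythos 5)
Your proposal is correct and structurally identical to the paper's argument: vary $u$ at fixed $\gamma$, determine $\lambda_2^*,\dots,\lambda_5^*$ by testing with $1$ and $\nu_i$, integrate by parts on $\Gamma^{(1)}$ and $\Gamma^{(2)}$ separately to obtain \eqref{eqn-EL-gamma1} and \eqref{eqn-jumpconditions2}; then vary $\gamma$ at fixed $u$, use the first variation of length and of the enclosed areas, and eliminate $\lambda_1^*$ by averaging over $\gamma^*$ to obtain \eqref{eqn-EL-gamma}. The one genuine point of divergence is the bulk term in the interface variation. The paper generates the deformation by a flow map and quotes the transport identity $\frac{d}{d\tau}\int_{\Gamma^{(i)}(\tau)}f=\int_{\gamma}f\,v\cdot\nu^{(i)}$, then states the symmetric combination $\Delta_\Gamma u^{(1)}+\Delta_\Gamma u^{(2)}+4u/R^2$ without comment; applied literally with one-sided traces that identity yields $e_m(-1,\cdot)^{(1)}-e_m(+1,\cdot)^{(2)}$, which differs from the symmetrised value by terms proportional to $[\Delta_\Gamma u]^{(2)}_{(1)}$, and at a critical point this jump is $-2\Lambda\neq 0$. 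You have correctly identified this as the delicate step, and your choice $\tfrac12\bigl(A^{(1)}+A^{(2)}\bigr)=\tfrac12[\chi_\gamma A]^{(2)}_{(1)}$ reproduces exactly the expression the paper asserts. Be aware, though, that your mollification argument is only a heuristic: it tacitly exchanges the limits $\varepsilon\to 0$ and $\tau\to 0$, and the unmollified energy is not two-sidedly differentiable in $\tau$ at a configuration where $\Delta_\Gamma u$ jumps, so what you produce is a regularised first variation rather than the classical one. This puts you at the same (formal) level of rigour as the paper, whose hypothesis that the minimiser be ``sufficiently regular so that all the following terms are well defined'' is quietly doing this work; just do not present the mollification as a complete justification.
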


\begin{rem}
	We note that equations \eqref{eqn-EL-gamma1} and \eqref{eqn-jumpconditions2} are order $\mathcal{O}(\rho)$ approximations and \eqref{eqn-EL-gamma} is an order $\mathcal{O}(\rho^2)$ approximation of the sharp interface equilibrium equations given in Problem 3.10 in \cite{elliott2010surface}. 
	In this case, the Lagrange multiplier $\lambda_A$ for the area constraint is interpreted as the surface tension $\sigma$. 
\end{rem}

\subsection{Variation of the membrane height}

As a first step to prove Proposition \ref{Prop:SI-min} we consider the variation with respect to $u$ in the direction  $\zeta \in H^2(\Gamma)$ whilst keeping $\gamma$ fixed. It is defined in the usual sense and are fairly straightforward to compute. For the constraints \eqref{eqn-constraint2} and \eqref{eqn-constraint3} we obtain that 
\begin{align}
\left<C_2^\prime(u),(\zeta)\right>=&\int_{\Gamma}\zeta\:{\rm d}\Gamma, \\
\left< \mathcal{N}_i^\prime(u),(\zeta)\right>=&\int_{\Gamma}\zeta\nu_i\:{\rm d}\Gamma  ~~\text{ for } i\in\{1,2,3\}, 
\end{align}
whilst for the sharp interface energy the results it 
\begin{multline}
\left<\mathcal{E}^\prime_{SI}(u,\gamma),(\zeta,0)\right> \\
= \int_{\Gamma} \left(\kappa\Delta_\Gamma u\Delta_\Gamma \zeta +\left(\sigma-\frac{2\kappa}{R^2}\right)\nabla_\Gamma u\cdot\nabla_\Gamma \zeta 
-\frac{2\sigma}{R^2} u\zeta +\kappa\Lambda\chi_\gamma\Delta_\Gamma \zeta+\frac{2\kappa\Lambda}{R^2}\chi_\gamma\zeta \right)\:{\rm d}\Gamma. 
\end{multline}

Let $(u^*,\gamma^*,\lambda^*)$ be a critical point of the Lagrangian $\mathcal L_{SI}$ sufficiently smooth such that all terms are well defined for the remainder of this subsection. In this point the first variation of the Lagrangian vanishes, giving the variational equation
\begin{multline}
\label{eqn-SI-Var}
\int_{\Gamma} \left(
\kappa\Delta_\Gamma u^*\Delta_\Gamma \zeta +\left(\sigma-\frac{2\kappa}{R^2}\right)\nabla_\Gamma u^*\cdot\nabla_\Gamma \zeta 
-\frac{2\sigma}{R^2} u^*\zeta +\kappa\Lambda\chi_{\gamma^*}\Delta_\Gamma \zeta+\frac{2\kappa\Lambda}{R^2}\chi_{\gamma^*}\zeta \right)\:{\rm d}\Gamma\\
+\int_\Gamma \left(\lambda_2^*\zeta+\sum_{i=1}^3\lambda_{i+2}^*\nu_i\zeta \right)\:{\rm d}\Gamma =0,\quad\forall\zeta\in H^2(\Gamma).
\end{multline}
By testing with $\zeta=1$ and using that $(u^*,\gamma^*)$ satisfies \eqref{eqn-constraint2} we obtain that 
\begin{align}
\label{eqn-lagrange1(SI)}
\lambda_2^* = \frac{2\kappa\Lambda}{R^2} \frac{|\Gamma^{(1)}|-|\Gamma^{(2)}|}{|\Gamma|} = -\frac{2\kappa\Lambda\alpha}{R^2}.
\end{align} 
By testing with $\zeta=\nu_i$ and using that $(u^*,\gamma^*)$ satisfies \eqref{eqn-constraint3} then we find that  
\begin{align}
\label{eqn-lagrange2(SI)}
\lambda_3^*=\lambda_4^*=\lambda_5^*=0, 
\end{align} 
where we have used that $-\Delta_\Gamma \nu_i=\frac{2}{R^2}\nu_i$ (see \cite{elliott2016small}). Integrating \eqref{eqn-SI-Var} by parts we calculate that
\begin{align}
\begin{split}
0=& \int_{\Gamma^{(1)}} \left(\left(\Delta_\Gamma+\frac{2}{R^2}\right)\left(\kappa\Delta_\Gamma u^*-\sigma u^*+\kappa\Lambda(-1-\alpha)\right)\right)\zeta\:{\rm d}\Gamma^{(1)}  \\
& +\int_{\Gamma^{(2)}} \left(\left(\Delta_\Gamma+\frac{2}{R^2}\right)\left(\kappa\Delta_\Gamma u^*-\sigma u^*+\kappa\Lambda(1-\alpha)\right)\right)\zeta\:{\rm d}\Gamma^{(2)}  \\
& -\int_{\gamma}\kappa\left(\Delta_\Gamma {u^*}^{(2)}-\Delta_\Gamma {u^*}^{(1)}+2\Lambda\right)\nabla_\Gamma\zeta\cdot\mu-\left(\nabla_\Gamma\Delta_\Gamma {u^*}^{(2)}-\nabla_\Gamma\Delta_\Gamma {u^*}^{(1)}\right)\cdot\mu\zeta\:{\rm d}\gamma,
\end{split}
\end{align}
for all $\zeta\in H^2(\Gamma)$. This proves \eqref{eqn-EL-gamma1} and \eqref{eqn-jumpconditions2} from Proposition \ref{Prop:SI-min}.

\subsection{Variation of the interface}	

The second step to prove Proposition \ref{Prop:SI-min} is to calculate the first variation of the Lagrangian $\mathcal L_{SI}$ with respect to $\gamma$. This variation is defined by the instantaneous change of the energy due to the deformation of the interface between rafts and non-rafts regions. 

Given any smooth tangential vector field $v : \Gamma \to \mathbb{R}^3$ let $x(\tau)$ be the solution to $x'(\tau) = v(x(\tau))$ and then 
\begin{align}
 \Gamma^{(i)}(\tau) &= \{ x(\tau) \, | \, x(0) \in \Gamma^{(i)} \}, \quad i=1,2, \\
 \gamma(\tau) &= \{ x(\tau) \, | \, x(0) \in \gamma \}. 
\end{align}
Thanks to the smoothness of $v$, for all $\tau$ close to $0$ an admissible two-phase surface $\Gamma = \Gamma^{(1)}(\tau) \cup \gamma(\tau) \cup \Gamma^{(2)}(\tau)$ is obtained in the sense that $(u,\gamma(\tau)) \in \mathcal{K}_{SI}$. The variation of the Lagrangian is defined as 
\begin{equation}
 \langle \mathcal{L}^\prime_{SI}(u,\gamma,\lambda), (0,v,0) \rangle = \frac{d}{d\tau} \mathcal{L}_{SI}(u,\gamma(\tau),\lambda) \Big{|}_{\tau = 0}. 
\end{equation}
Regarding derivatives of $\tau$ dependent domains we note the following identities that are, for instance, proved in \cite{dziuk2013finite}, Theorem 5.1: For any smooth function $f : \Gamma \to \mathbb{R}$ 
\begin{equation}
 \frac{d}{d\tau} \int_{\Gamma^{(i)}(\tau)} f \:{\rm d}\Gamma^{(i)}(\tau) = \int_{\gamma(\tau)} f v \cdot \nu^{(i)}\:{\rm d}\gamma(\tau), \quad i=1,2, 
\end{equation}
and moreover 
\begin{equation}
 \frac{d}{d\tau} \int_{\gamma(\tau)} 1 \:{\rm d}\gamma(\tau) = \int_{\gamma(\tau)} H_\gamma v \cdot \mu \:{\rm d}\gamma(\tau).
\end{equation}

Using these identities the variation of the sharp interface energy can be calculated, which yields 
\begin{equation} 
\left< \mathcal{E}^\prime_{SI} (u,\gamma),(0,v) \right> = \int_{\gamma} \left[ \hat{b} H_{\gamma} - \kappa \Lambda \left( \Delta_\Gamma u^{(2)} + \Delta_\Gamma u^{(1)} + \frac{4 u}{R^2} \right) \right] v \cdot \mu\:{\rm d}\gamma. 
\end{equation} 
For the constraint functional \eqref{eqn-constraint1} we obtain that 
\begin{equation}
 \left< {C}_1^\prime (\gamma), v \right> = 2 \int_{\gamma} v \cdot \mu \:{\rm d}\gamma. 
\end{equation}
If $(u^*,\gamma^*,\lambda^*)$ is a critical point of the Lagrangian $\mathcal{L}_{SI}$ then 
\begin{align}
0 = \langle \mathcal{L}_{SI}^\prime(u^*,\gamma^*,\lambda^*), (0,v,0) \rangle = \left< \mathcal{E}^\prime_{SI}(u^*,\gamma^*), (0,v) \right> +\lambda_1^* \left< {C}_1^\prime(\gamma^*), v \right>
\end{align}
for all smooth tangential vector fields $v$ on $\Gamma$. We find that
\begin{align}
2\lambda_1^* = \dashint_{\gamma^*} \Big{(} {\widehat{b}H_{\gamma^*}} - \frac{4 \kappa \Lambda u^*}{R^2} - \kappa \Lambda \big{(} \Delta_\Gamma {u^*}^{(1)} + \Delta_\Gamma {u^*}^{(2)} \big{)} \Big{)}\:{\rm d}\gamma^*, 
\end{align}
and hence obtain \eqref{eqn-EL-gamma}. This completes the proof of Proposition \ref{Prop:SI-min}.

\subsection{Reduced sharp interface energy}
Analogous to Subsection \ref{Reduced diffuse interface energy} we introduce a reduced sharp interface energy. By using the Euler-Lagrange equation \eqref{eqn-EL-gamma1}, that $BV(\Gamma)\hookrightarrow L^2(\Gamma)$  (Corollary 3.49, \cite{ambrosio2000functions}) and repeating the argument of  Subsection \ref{Reduced diffuse interface energy} we find that
\begin{align}
\hf^*=\mathcal{G}(\mathbf{P}(\chi_{\gamma^*})).
\label{eqn-Defhf-2}
\end{align}
Here, $\mathcal{G}$ is the Green's function defined in \eqref{eqn-green} and $\mathbf{P}$ is the $L^2-$projection onto \\$\text{span}\{1,\nu_1,\nu_2,\nu_3\}^\perp$.

By elliptic regularity we have that $\mathcal{G}(\mathbf{P}(\chi_{\gamma^*}))\in H^2(\Gamma)$, and hence that $(\gamma^*,\mathcal{G}(\mathbf{P}(\chi_{\gamma^*}))) \in\mathcal{K}_{SI}$. 

Using \eqref{eqn-Defhf-2} we define the \emph{reduced sharp interface energy}, 
\begin{align}
\label{eqn-reduced-sharp}
\widetilde{\mathcal{E}}_{SI}(\gamma^*):=\mathcal{E}_{SI}(\mathcal{G}(\mathbf{P}(\chi_{\gamma^*})),\gamma^*).
\end{align}
Using the same method as Subsection \ref{Sec:Gamma-conv} we simplify the reduced sharp interface energy to obtain that
\begin{align}
\label{eqn-refucedenergy-2}
\widetilde{\mathcal{E}}_{SI}(\gamma^*)=\begin{cases}
\int_{\Gamma}\frac{\kappa\Lambda}{2}\mathbf{P}(\chi_{\gamma^*})\left(\frac{\sigma}{\kappa}+\frac{2}{R^2}\right)\mathcal{G}(\mathbf{P}(\chi_{\gamma^*}))+\frac{\kappa\Lambda^2}{2}(\chi_{\gamma^*}^2-(\mathbf{P}(\chi_{\gamma^*}))^2)\:{\rm d}\Gamma+\int_{\gamma^*}\widehat{b}\:{\rm d}\gamma&\text{for }\gamma^*\in\widetilde{\mathcal{K}}_{SI},\\
+\infty &\text{otherwise,}
\end{cases}
\end{align}
where 
\begin{align}
\widetilde{\mathcal{K}}_{SI}:=&\left\{\gamma^*\in C^1(\Gamma) : |\Gamma^{(1)}|-|\Gamma^{(2)}|+\alpha|\Gamma|=0 \right\}.
\end{align}
and $\widehat{b}=c_Wb$, see Proposition \ref{Prop-GammaLimit}.

We highlight that the reduced sharp interface energy coincides with the $\Gamma-$limit of the diffuse interface energy calculated in Subsection \ref{Sec:Gamma-conv}.

We write the constrained minimisation problem for the reduced sharp interface energy below.
\begin{prb}
	\label{prob-sharp-reduced}
	Find $\widetilde{\gamma^*}\in \widetilde{\mathcal{K}}_{SI}$ such that
	\begin{align}
	\widetilde{\mathcal{E}}_{SI}(\widetilde{\gamma^*})=\inf_{\gamma \in \widetilde{\mathcal{K}}_{SI}}\widetilde{\mathcal{E}}_{SI}(\gamma).
	\end{align}	
\end{prb}
Finally, we note that finding a minimiser of  Problem \ref{prob-sharp-reduced} is equivalent to finding a  minimiser to Problem \ref{prob-sharp} since
\begin{align}
\label{eqn:red-ord-3-2}
\mathcal{E}_{SI}(\gamma^*,u^*) \leq \mathcal{E}_{SI}(\widetilde{\gamma}^*,\mathcal{G}(\mathbf{P}(\chi_{\widetilde{\gamma}^*}))) = \widetilde{\mathcal{E}}_{SI}(\widetilde{\gamma}^*) \leq \widetilde{\mathcal{E}}_{SI}(\gamma^*) = \mathcal{E}_{SI}(\gamma^*,\mathcal{G}(\mathbf{P}(\chi_\gamma^*))) = \mathcal{E}_{SI}(\gamma^*,u^*).
\end{align}

To summarise the previous two sections, we have related the diffuse interface energy \eqref{eqn-diffuse-energy} to the sharp interface energy \eqref{eqn-sharp-interface-energy} as follows.
\begin{enumerate}
	\item Minimisers of the diffuse interface energy \eqref{eqn-diffuse-energy} coincide with minimisers of the reduced diffuse interface energy \eqref{eqn-refucedenergy}.
	\item The reduced diffuse interface energy \eqref{eqn-refucedenergy} $\Gamma-$converges to the reduced sharp interface energy \eqref{eqn-reduced-sharp}.
	\item Minimisers of the reduced sharp interface energy \eqref{eqn-reduced-sharp} coincide with minimisers of the sharp interface energy \eqref{eqn-sharp-interface-energy}.
\end{enumerate}

%%%%%%%%%%%%%%%%%%%%%%%%%%%%%%%%%%%%%%%%%%%%%%%%%%%%%%%%%%%%%%%%%%%%%%%%%%%%%%%%%%%%%
\section{Formal asymptotics for a phase field gradient flow}
\label{Sec-Formal}
We now consider the following time evolution problem,
	\begin{align}
	\label{eqn-Diffuse-2}
	0=&\left(\Delta_\Gamma+\frac{2}{R^2}\right)(\kappa\Delta_\Gamma u-\sigma u +\kappa\Lambda(\phi-\alpha))&&\text{on }\Gamma,
	\\
	\label{eqn-Diffuse-1}
	\beta\epsilon\phi_t=&b\epsilon\Delta_\Gamma\phi-\frac{b}{\epsilon}W^\prime(\phi)-\kappa\Lambda\Delta_\Gamma u - \frac{2\kappa\Lambda u}{R^2}-{\kappa\Lambda^2 \phi}+\lambda&&\text{on }\Gamma,
	\end{align}
	with initial conditions $\phi(0)=\phi_0$ and $u(0)=u_0$ and satisfying the constraints \eqref{eqn-constraint-DI}. Here, $\lambda$  is the Lagrange multiplier associated with the constraint $\dashint_\Gamma \phi=\alpha$ and $\beta>0$ is a kinetic coefficient. These equations were introduced in \cite{elliott_hatcher_2020} as a conserved $L^2-$gradient flow of the diffuse interface energy \eqref{eqn-diffuse-energy}. There they were used to numerically obtain local equilibria of \eqref{eqn-diffuse-energy}, with respect to the phase field whilst assuming that the membrane height is in mechanical equilibrium. These were used to numerically compute local equilibria of \eqref{eqn-diffuse-energy}, which are solutions to the Euler-Lagrange equations \eqref{eqn-diffuse-EL2} and \eqref{eqn-diffuse-EL}. 
	\\
	\\
	Similarly, turning to consider the sharp interface energy \eqref{eqn-sharp-interface-energy}, the following evolution problem can be obtained as a conserved $L^2-$gradient flow for two-phase surfaces $(u(t),\gamma(t))\in\mathcal{K}_{SI}$, \cite{hatcher2020},
\begin{align}
\label{eqn-GF-gamma1}
0=&\left(\Delta_\Gamma+\frac{2}{R^2}\right)\left(\kappa\Delta_\Gamma u-\sigma u+\kappa\Lambda(-1-\alpha)\right)&&\text{on }\Gamma^{(1)}(t),
\\
\label{eqn-GF-gamma2}
0=&\left(\Delta_\Gamma+\frac{2}{R^2}\right)\left(\kappa\Delta_\Gamma u-\sigma u+\kappa\Lambda(1-\alpha)\right)&&\text{on }\Gamma^{(2)}(t),
\\
\label{eqn-GF-gamma}
\begin{split}
\widehat{\beta}\mathcal{V}=&-\widehat{b}H_\gamma+\widehat{b}\dashint_\gamma H_\gamma\:{\rm d}\gamma+\frac{4\kappa\Lambda}{R^2}\left(u-\dashint_\gamma u\:{\rm d}\gamma\right)	
\\
&+\kappa\Lambda\left(\Delta_\Gamma u^{(1)}+\Delta_\Gamma u^{(2)}-\dashint_{\gamma} \left(\Delta_\Gamma u^{(1)}+\Delta_\Gamma u^{(2)}\right)\:{\rm d}\gamma\right)
\end{split}&&\text{on }\gamma(t),
\end{align}
with initial conditions $u(0)=u_0$ and $\gamma(0)=\gamma_0$ satisfying the constraints \eqref{eqn-constraint1}-\eqref{eqn-constraint3}, and jump conditions across $\gamma(t)$,
\begin{align}
\label{eqn-GF-jumpconditions1}
[u]^{(2)}_{(1)}&=0,
&
[\nabla_\Gamma u\cdot\mu]^{(2)}_{(1)}&=0
\\
\label{eqn-GF-jumpconditions2}
[\Delta_\Gamma u]^{(2)}_{(1)}&=-2\Lambda,
&
[\nabla_\Gamma \Delta_\Gamma u\cdot\mu]^{(2)}_{(1)}&=0.
\end{align}
Here $\mathcal{V}(t)$ is the velocity of $\gamma(t)$ in the direction of the co-normal $\mu$ and $\widehat{\beta}=c_W\beta$. Note that stationary solutions of the gradient flow equations  \eqref{eqn-GF-gamma1}-\eqref{eqn-GF-jumpconditions2} are the Euler-Lagrange equations \eqref{eqn-EL-gamma1}-\eqref{eqn-jumpconditions2}.

Our objective in this section is to show that the limiting problem of \eqref{eqn-Diffuse-2}--\eqref{eqn-Diffuse-1} as $\epsilon \to 0$ is \eqref{eqn-GF-gamma1}--\eqref{eqn-GF-jumpconditions2}. For this purpose we note that since \eqref{eqn-Diffuse-2} coincides with \eqref{eqn-diffuse-EL}, then the calculation given in Section \ref{Reduced diffuse interface energy} can be repeated here and \eqref{eqn-Diffuse-2} can be reduced to \eqref{eqn-Defhf}. This has the benefit of only considering a second order equation instead of a fourth order equation for the height function, but has the added cost of involving the non-local projection operator $\mathbf{P}$. To deal with the non-local term it will prove helpful to write $p:=\mathbf{P}(\phi)$. 

 Hence, the system \eqref{eqn-Diffuse-2}-\eqref{eqn-Diffuse-1} can be reduced to
 \begin{align}
 \label{Diffuse2}
 p=&-\Delta_\Gamma u+\frac{\sigma}{\kappa}u&&\text{on }\Gamma,
 \\
 \label{Diffuse2b}
 p=&\mathbf{P}(\phi)&&\text{on }\Gamma,
 \\
 \label{Diffuse1}
 \beta \epsilon\phi_t=&b\epsilon\Delta_\Gamma\phi-\frac{b}{\epsilon}W^\prime(\phi)-\kappa\Lambda\Delta_\Gamma u - \frac{2\kappa\Lambda u}{R^2}-{\kappa\Lambda^2 \phi}+\lambda&&\text{on }\Gamma.
 \end{align}
 It is to \eqref{Diffuse2}-\eqref{Diffuse1} we perform a formal asymptotic analysis based on matching asymptotic $\epsilon$ expansions in the diffuse interfaces and in the bulk phases away from the interfaces. The technique is well established for phase field models, for instance, see \cite{FP95} for details of the procedure. We denote by $(\phi_\epsilon,u_\epsilon,\lambda_\epsilon, p_\epsilon)$ a family of solutions to \eqref{Diffuse2}-\eqref{Diffuse1} that converges formally to some limit denoted by $(\phi,u,\lambda,p)$. We assume that $\phi=\chi_\gamma$ for some smooth curve $\gamma$ that separates the regions $\Gamma^{(1)}=\{(x,t)\in \Gamma\times [0,T] :\phi(x,t)=-1 \}$ and $\Gamma^{(2)}=\{(x,t)\in \Gamma\times [0,T] :\phi(x,t)=+1\}$, see \eqref{eqn-chi-gamma}. Using that $\mathbf{P}$ is the $L^2-$projection onto $\text{span}\{1,\nu_1,\nu_2,\nu_3\}^\perp$ we calculate that
 \begin{align}
 \label{eqn-projection}
 \mathbf{P}(\phi)=\begin{cases}
 -1-\frac{1}{|\Gamma|}\left(|\Gamma^{(2)}|-|\Gamma^{(1)}|\right)-\frac{1}{3|\Gamma|}\sum_{i=1}^{3}\nu_i\left(\int_{\Gamma^{(2)}}\nu_i-\int_{\Gamma^{(1)}}\nu_i\right)&\text{on }\Gamma^{(1)}\\
 +1-\frac{1}{|\Gamma|}\left(|\Gamma^{(2)}|-|\Gamma^{(1)}|\right)-\frac{1}{3|\Gamma|}\sum_{i=1}^{3}\nu_i\left(\int_{\Gamma^{(2)}}\nu_i-\int_{\Gamma^{(1)}}\nu_i\right)&\text{on }\Gamma^{(2)}.
 \end{cases}
 \end{align}
 
 We will show that the limit solution $(\phi,u,\lambda,p)$ satisfies the following free boundary value problem on $\Gamma$,
\begin{align}
\label{Sharp1}
\begin{split}
\phi=&- 1\\
\phi=&+ 1
\end{split}&
\begin{split}
&\text{on }\textbf{$\Gamma$}^{(1)}\\
&\text{on }\textbf{$\Gamma$}^{(2)}
\end{split}
\\
\label{Sharp2}
\begin{split}
-\Delta_\Gamma u+\frac{\sigma}{\kappa}u=&\Lambda p
\end{split}&
\begin{split}
&\text{on }\textbf{$\Gamma$}^{(1)}\cup\textbf{$\Gamma$}^{(2)}\qquad
\end{split}
\\
\label{Sharp3}
\begin{split}
p=&\mathbf{P}(\phi)
\end{split}&
\begin{split}
&\text{on }\textbf{$\Gamma$}^{(1)}\cup\textbf{$\Gamma$}^{(2)}
\end{split}
\\
\label{Sharp5}
[u]^{(2)}_{(1)}=&0&&\text{on }\gamma
\\
\label{Sharp6}
[\nabla_\Gamma u]^{(2)}_{(1)}\cdot\mu=&0&&\text{on }\gamma
\\
\label{Sharp9}
\begin{split}
\widehat{\beta}\mathcal{V}=&-\widehat{b}H_\gamma+\widehat{b}\dashint_\gamma H_\gamma\:{\rm d}\gamma+\left(\frac{4\kappa\Lambda}{R^2}+2\sigma\Lambda\right)\left(u-\dashint_\gamma u\:{\rm d}\gamma\right)
\\
&-\kappa\Lambda^2\left(\mathbf{P}(\phi)^{(1)}+\mathbf{P}(\phi)^{(2)}-\dashint_\gamma \mathbf{P}(\phi)^{(1)}+\mathbf{P}(\phi)^{(2)}\:{\rm d}\gamma\right)
\end{split}&&\text{on }\gamma
\end{align}
We comment that \eqref{eqn-GF-gamma1}-\eqref{eqn-GF-jumpconditions2} can be obtained from \eqref{Sharp1}-\eqref{Sharp9}. Firstly, by combining \eqref{Sharp2} and \eqref{Sharp3}, applying the operator $\left(\Delta_\Gamma+\frac{2}{R^2}\right)$, and using that the components of the normal $\nu_i$ are in the null space of $\left(\Delta_\Gamma+\frac{2}{R^2}\right)$ we obtain \eqref{eqn-GF-gamma1} and \eqref{eqn-GF-gamma2}. Secondly, again combining \eqref{Sharp2} and \eqref{Sharp3}, and substituting this into \eqref{Sharp9} to eliminate $\mathbf{P}(\phi)^{(i)}$ we obtain \eqref{eqn-GF-gamma}. Finally using \eqref{eqn-projection} we calculate that $[\mathbf{P}(\phi)]^{(2)}_{(1)}=2$ and $[\nabla_\Gamma\mathbf{P}(\phi)]^{(2)}_{(1)}\cdot\mu=0$. Hence, we obtain \eqref{eqn-GF-jumpconditions1} and \eqref{eqn-GF-jumpconditions2} from \eqref{Sharp2}, \eqref{Sharp5} and \eqref{Sharp6}.
Altogether, we see that the equations \eqref{Sharp1}-\eqref{Sharp9} indeed yield the sharp-interface gradient flow equations \eqref{eqn-GF-gamma1}-\eqref{eqn-GF-gamma}. Therefore it only remains to show \eqref{Sharp1}-\eqref{Sharp9}.    
\subsection{Matching conditions}
As is standard for these problems we will consider outer expansions (solutions that are only valid away from the interface $\gamma$) and inner expansions (solutions that are only valid near to the interface). We consider inner expansions in addition to the outer expansions since near the interface $\gamma$ it's possible that there could be very steep transition layers. Therefore the derivatives could contribute non-zero order $\mathcal{O}(\epsilon)$ terms which need to be accounted for. On the region where both inner and outer expansions are valid matching conditions relate the outer expansions to the inner expansions.
\\
\\	
	We will assume that the outer expansions have the form
	\begin{align}
	f_\epsilon(x,t)=\sum_{k=0}^{\infty}\epsilon^kf_k(x,t)
	\end{align}
	where $f_\epsilon=\phi_\epsilon,u_\epsilon, \lambda_\epsilon$ or $p_\epsilon$. To write down the inner expansions we consider a parameterisation $\Theta(s,r,t)$ such that $s\mapsto\Theta(s,0,t)$ gives a parameterisation of $\gamma(t)$ and $r$ denotes the signed geodesic distance of a point $x=\Theta(s,r,t)\in \Gamma$ to the interface $\gamma(t)$. Further details of a suitable parameterisation for the sphere can be found in \cite{garcke2016coupled}. Since the length scale of the transition layers is $\epsilon$ we introduce the parameter $z$ given by
	\begin{align}
	z=\frac{r}{\epsilon}.
	\end{align}
	We then assume that the inner expansions are of the form
	\begin{align}
	f_\epsilon(x,t)=F(s,z,t;\epsilon)=\sum_{k=0}^{\infty}\epsilon^k F_k(s,z,t)
	\end{align}
	where again $f_\epsilon=\phi_\epsilon,u_\epsilon,\lambda_\epsilon$ or $p_\epsilon$ with $F_k=\Phi_k,U_k,L_k$ or $P_k$ respectively. On the region where both outer and inner expansions are valid we prescribe the following matching conditions to ensure consistency, 
\begin{align}
\label{Match1}
F_0(s,\pm\infty,t)&\sim f^\pm_0(x,t)\\
\label{Match2}
\partial_z F_0(s,\pm \infty, t)&\sim 0
\\
\label{Match3}
\partial_z F_1(s,\pm \infty, t)&\sim\nabla_\Gamma f^\pm_0(x,t)\cdot\mu(x,t),
\end{align}
where $f_0^\pm(x,t)=\lim_{\delta\to 0}f(\Theta(s,\pm\delta,t),t)$. A derivation of these matching conditions can be found in \cite{garcke2006second}.
\subsection{Outer expansions}
We begin by matching orders of $\epsilon$ for the outer expansions first. In \cite{rubinstein1992nonlocal} Rubinstein and Sternberg considered a formal asymptotic analysis for the conserved Allen-Cahn equation and demonstrated that for faster timescales it is sufficient to suppose the lowest order term of the Lagrange multiplier is of order $\mathcal{O}(\epsilon^0)$. Their analysis can equally we applied to our system of equations and so we make the assumption that the lowest order term of the Lagrange multiplier $\lambda$ is of order $\mathcal{O}(\epsilon^0)$.  Hence, considering terms of order $\mathcal{O}(\epsilon^{-1})$ in \eqref{Diffuse1}, we obtain that 
\begin{equation}
W^\prime(\phi_0)=0
\end{equation}
and hence the only stable solutions are
\begin{align}
\label{eqn-outer-phi}
\phi_0=\pm 1.
\end{align} 
 Therefore we deduce that $\phi_\epsilon\to \pm 1$, which justifies \eqref{Sharp1}. Furthermore by considering terms of order $\mathcal{O}(\epsilon^0)$ in \eqref{Diffuse2} and \eqref{Diffuse2b} we readily obtain \eqref{Sharp2} and \eqref{Sharp3}.

\subsection{Inner expansions}
Before considering the inner expansions we first have to write the Laplace-Beltrami operator and the time derivative in local coordinates near to the interface. Calculations in \cite{garcke2016coupled} show that the following expressions are obtained for a function $f(s,z,t)$ defined on a neighbourhood around the interface $\gamma$
\begin{align}
\label{coords1}
\Delta_\Gamma f=\frac{1}{\epsilon^2}\partial_{zz}f+\frac{H_{\gamma}}{\epsilon}\partial_z f+\Delta_{\gamma}f+\mathcal{O}(\epsilon)
\end{align}
and
\begin{align}
\label{coords2}
\frac{d}{dt} f=-\frac{1}{\epsilon}\mathcal{V}\partial_z f+\partial_t f+\mathcal{O}(\epsilon).
\end{align}
where $\Delta_\gamma$ is the Laplace-Beltrami operator along the curve $\gamma$.
\\
\\
Since we have assumed that the limit as $\epsilon\to 0$ exists, it follows that the terms of leading order in $\epsilon$ cancel out. We denote the inner expansions of $\phi_\epsilon,u_\epsilon,\lambda_\epsilon$ by $\Phi,U$ and $L$ respectively. We begin by considering terms of order $\mathcal{O}(\epsilon^{-2})$ and $\mathcal{O}(\epsilon^{-1})$ in \eqref{Diffuse2} to obtain that
\begin{align}
\label{3o2}
\partial_{zz}U_0&=0\\
\label{3o1}
H_\gamma\partial_z U_0+\partial_{zz} U_1&=0
\end{align}
Integrating \eqref{3o2} from $-\infty$ to $z$ and using the matching condition \eqref{Match2} we obtain that
\begin{equation}
\partial_z U_0=0
\label{H-independant-z}
\end{equation}
Hence
\begin{equation}
U_0(z=+\infty)=U_0(z=-\infty)
\end{equation}
from which we obtain \eqref{Sharp5} with the matching condition (\ref{Match1}). 
\\
\\
Similarly integrating \eqref{3o1} from $-\infty$ to $\infty$, and using the matching condition \eqref{Match3} we obtain \eqref{Sharp6}.

The terms of order $\mathcal{O}(\epsilon^{-1})$ in \eqref{Diffuse1} are 
\begin{align}
0=b\partial_{zz}\Phi_0-bW^\prime(\Phi_0)-\kappa\Lambda(H_\gamma\partial_z U_0+\partial _{zz}U_1)
\end{align}
which using \eqref{3o1} simplifies to
\begin{equation}
\label{1o1}
0=b\partial_{zz}\Phi_0-bW^\prime(\Phi_0).
\end{equation} 
Hence, using the outer expansion \eqref{eqn-outer-phi} and matching condition \eqref{Match1} we have that $\Phi_0(z,s,t)$ is a solution of 
\begin{align}
\label{eqn-Phi0}
\begin{cases}
\partial_{zz}\Phi_0=W^\prime(\Phi_0)\\
\Phi_0(\pm\infty)=\pm 1.
\end{cases}
\end{align}
%as $t\to\infty$, then 
We find that $\Phi_0(z,s,t)$ is independent of $s$ and $t$ and given by
\begin{equation}
\label{tanh}
\Phi_0(z) = \tanh\left(\frac{z}{\sqrt{2}}\right).
\end{equation}
Finally, we consider terms of order $\mathcal{O}(\epsilon^0)$ in \eqref{Diffuse1}, 
\begin{align}
\begin{split}
-\beta\partial_z\Phi_0\mathcal{V}=& \, bH_\gamma\partial_z\Phi_0-bW^{\prime\prime}(\Phi_0)\Phi_1+L_0+b\partial_{zz}\Phi_1-{\kappa\Lambda^2\Phi_0}
\\
&-\kappa\Lambda\left(\Delta_\gamma U_0+H_\gamma\partial_z U_1+\partial_{zz}U_2\right)-\frac{2\kappa\Lambda U_0}{R^2}.
\label{1o0}
\end{split} 
\end{align}
Considering the terms of order $\mathcal{O}(\epsilon^0)$ in \eqref{Diffuse2} we obtain 
\begin{equation}
\label{3o0}
-\left(\Delta_\gamma U_0+H_\gamma\partial_z U_1+\partial_{zz} U_2\right)=\Lambda P_0-\frac{\sigma}{\kappa}U_0. 
\end{equation}
Using \eqref{3o0} to simplify \eqref{1o0} we obtain that 
	\begin{align}
	\begin{split}
-\beta\partial_z\Phi_0\mathcal{V} =& bH_\gamma\partial_z\Phi_0-bW^{\prime\prime}(\Phi_0)\Phi_1+L_0+b\partial_{zz}\Phi_1 \\
&-\kappa\Lambda^2\Phi_0+\kappa\Lambda\left(\Lambda P_0-\left(\frac{\sigma}{\kappa}+\frac{2}{R^2}\right)U_0\right). 
\end{split}
\end{align}
It is straightforward to show that the function $\partial_z \Phi_0$ is in the kernel of the operator $-\partial_{zz} + W''(\Phi_0)$. To ensure solvability of the equation for $\Phi_1$ the source term has to be orthogonal to $\partial_z \Phi_0$ with respect to the $L^2$ inner product. We refer to \cite{alfaro2008singular}, Lemma 2.2 for the details (see also Lemma 4.1 in \cite{garcke2016coupled}). This condition reads 
\begin{multline}
0=\int_{-\infty}^{+\infty}-\beta(\partial_z\Phi_0)^2\mathcal{V}-bH_\gamma(\partial_z\Phi_0)^2-{L_0}\partial_z\Phi_0\\-\kappa\Lambda\left(\Lambda P_0-\left(\frac{\sigma}{\kappa}+\frac{2}{R^2}\right)U_0\right)\partial_z\Phi_0+\frac{\kappa\Lambda^2}{2}\partial_z(\Phi_0^2)\:{\rm d}z
\end{multline}
Applying \eqref{eqn-Phi0} we see that the last term vanishes. Using \eqref{tanh} we calculate that $\int_{-\infty}^{\infty}(\partial_z\Phi_0)^2{\rm d}z=\frac{2\sqrt{2}}{3}$, and using that  $\widehat{b}=\frac{2\sqrt{2}b}{3}$ and  $\widehat{\beta}=\frac{2\sqrt{2}\beta}{3}$ it follows that
	\begin{equation}
\widehat{\beta}\mathcal{V}=-\widehat{b}H_\gamma-{2L_0}-\kappa\Lambda\int_{-\infty}^{+\infty}\left(\Lambda P_0-\left(\frac{\sigma}{\kappa}+\frac{2}{R^2}\right)U_0\right)\partial_z\Phi_0\:{\rm d}z
\end{equation}
where above we have used that $\Phi_0(\pm \infty)=\pm 1$. 

We recall from Subsection \ref{Reduced diffuse interface energy} that $\mathbf{P}$ is the $L^2$-projection onto $\text{span}\{1,\nu_1,\nu_2,\nu_3\}^\perp$. Then since $\dashint_\Gamma\phi\:{\rm d}\Gamma=\alpha$ and the components of the normal, $\nu_i$, are in the kernel of the operator $\left(\Delta_\Gamma+\frac{2}{R^2}\right)$ it follows that  
\begin{align}
\label{eqn-proj}
\left(\Delta_\Gamma+\frac{2}{R^2}\right)(\phi-\alpha)=\left(\Delta_\Gamma+\frac{2}{R^2}\right)\mathbf{P}(\phi).
\end{align}
Considering terms of $\mathcal{O}(\epsilon^{-2})$ in \eqref{eqn-proj} we obtain that
\begin{align}
\partial_{zz}\Phi_0=\partial_{zz}P_0.
\end{align}
So by integrating this and using the matching condition \eqref{Match2} it follows that
\begin{align}
\label{eqn-inner-projection}
\partial_z\Phi_0=\partial_z P_0.
\end{align}
Using \eqref{H-independant-z} (that is $U_0$ is independent of $z$) and \eqref{eqn-inner-projection} we obtain that
\begin{equation}
\widehat{\beta}\mathcal{V}=-\widehat{b}H_\gamma-{2L_0}-\frac{\kappa\Lambda^2}{2}\int_{-\infty}^{\infty}\partial_z((P_0)^2)\:{\rm d}z+{\kappa\Lambda}\left(\frac{\sigma}{\kappa}+\frac{2}{R^2}\right)U_0\int_{-\infty}^{\infty}\partial_z\Phi_0\:{\rm d}z
\end{equation}
which using \eqref{eqn-Phi0} simplifies to give
\begin{align}
\widehat{\beta}\mathcal{V}&=-\widehat{b}H_\gamma-{2L_0}+{2\kappa\Lambda}\left(\frac{\sigma}{\kappa}+\frac{2}{R^2}\right)U_0-\frac{\kappa\Lambda^2}{2}\left((P_0(+\infty))^2-(P_0(-\infty))^2\right).
\end{align}
By integrating \eqref{eqn-inner-projection} and using the matching condition \eqref{Match1} we obtain that 
\begin{align}
\label{eqn-inner-projection2}
P_0(+\infty)-P_0(-\infty)=\Phi_0(+\infty)-\Phi_0(-\infty)=2.
\end{align}
Therefore, using \eqref{eqn-inner-projection2} gives that
\begin{align}
\label{preSharp9}
\begin{split}
\widehat{\beta}\mathcal{V}=&-\widehat{b}H_\gamma-{2L_0}+\left(\frac{4\kappa\Lambda}{R^2}+2\sigma\Lambda\right)U_0-{\kappa\Lambda^2}\left(P_0(-\infty)+ P_0(+\infty)\right).
\end{split}
\end{align}
It remains to determine $L_0$, for which we use the constraint $\dashint_{\Gamma}\phi_\epsilon=\alpha$ (a similar example can be found in \cite{berlyand2017sharp}). Hence it follows using \eqref{coords2} and considering terms of order $\mathcal{O}(\epsilon^{-1})$ that
\begin{align}
0=\int_{\gamma}\mathcal{V}\partial_z\Phi_0\:{\rm d}\gamma
\end{align}
and hence using that $\partial_z\Phi_0$ is independent of $s$ we obtain that
\begin{align}
\int_{\gamma}\mathcal{V}\:{\rm d}\gamma=0.
\end{align}
Integrating \eqref{preSharp9} and using the above result we obtain that
\begin{align}
2L_0=&\dashint_\gamma-\widehat{b}H_\gamma+\left(\frac{4\kappa\Lambda}{R^2}+2\sigma\Lambda\right)U_0-{\kappa\Lambda^2}\left(P_0(-\infty)+ P_0(+\infty)\right)\:{\rm d}\gamma.
\end{align}
Finally applying the matching condition \eqref{Match1} and using \eqref{Sharp3}
gives \eqref{Sharp9}.

%%%%%%%%%%%%%%%%%%%%%%%%%%%%%%%%%%%%%%%%%%%%%%%%%%%%%%%%%%%%%%%%%%%%%%%%%%%%%%%%%%%%%
\section{Conclusion}
\label{Sec-Conclusion}
We have analysed and related  sharp and diffuse interface energies obtained by applying a perturbation approach for two-phase approximately spherical biomembranes. We simplified the diffuse interface energy by using the Euler-Lagrange equations to eliminate the height function in order to obtain what we've referred to as the reduced diffuse interface energy. In particular we showed that the minimisation problem for the original energy is equivalent to the minimisation problem for the reduced energy. Furthermore, we calculated the $\Gamma-$limit of the reduced diffuse interface energy and considered the minimisation problem. This is important since results relating to $\Gamma-$convergence can be used to show that minimisers of \eqref{eqn-diffuse-energy} converge to a minimiser of \eqref{eqn-sharp-interface-energy}.

We then performed a formal asymptotic analysis for a system of gradient flow equations of the diffuse interface energy that had previously been considered in \cite{elliott_hatcher_2020}.  The free boundary problem attained from this analysis coincided with the corresponding gradient flow equations for the sharp interface energy. Here, we again showed how using this reduced energy could simplify this calculation.

\subsection*{Acknowledgements}
The work of CME was partially supported by the Royal Society via a Wolfson Research Merit Award. 
The research of LH was funded by the Engineering and Physical Sciences Research Council grant EP/H023364/1
under the MASDOC centre for doctoral training at the University of Warwick.

\bibliographystyle{acm}
\bibliography{bib}
\end{document}